\theoremstyle{plain}%
\newtheorem{thm}{Theorem}[section]
\newtheorem{lemma}[thm]{Lemma}
\newtheorem{prop}[thm]{Proposition}
\newtheorem{cor}{Corollary}
\newtheorem{claim}{Claim}
\theoremstyle{definition}
\newtheorem{defn}{Definition}%
\newtheorem{exmp}{Example}[section]
\theoremstyle{remark}
\newtheorem{remark}{Remark}
\newcommand{\F}{\mathbb{F}_2}
\newcommand{\Fn}{\mathbb{F}_{2^n}}
\newcommand{\de}[1][d]{{\normalfont\texttt{Inv}_{{#1}}}\,}
\newcommand{\dego}[1][2^k+1]{{\normalfont\texttt{Inv}_{{#1}}}\,}
\newcommand{\dekas}[1][2^{2k}-2^k+1]{{\normalfont\texttt{Inv}_{{#1}}}\,}
\newcommand{\ordb}[1][d]{{{\theta}_{#1}\,}}
\newcommand{\spdb}{{\normalfont\texttt{S}_{d}}\,}
\begin{document}

\begin{frontmatter}
\author{Gohar M. Kyureghyan}
\address{Department of Mathematics, Otto-von-Guericke University,\\ Universit\"atsplatz 2, 39106 Magdeburg, Germany \\
Email: gohar.kyureghyan@ovgu.de}

\author{Valentin Suder}
\address{INRIA Paris-Rocquencourt, Project-Team SECRET\\ Domaine de Voluceau, 78153 Le Chesnay, France \\ Email: valentin.suder@inria.fr}

\title{On Inversion in $\mathbb{Z}_{2^n-1}$}
\date{}

\begin{abstract}
In this paper  we determined explicitly the multiplicative inverses of the Dobbertin and Welch APN exponents in $\mathbb{Z}_{2^n-1}$,
and we described the binary weights of the inverses of the Gold and Kasami exponents. 
We studied the function $\de(n)$, which for a fixed positive integer $d$ maps integers $n\geq 1$ to
the least positive residue of the inverse of $d$ modulo $2^n-1$, if it exists. In particular, we showed that the function $\de$ is completely
determined by its values for  $1 \leq n \leq \ordb$, where $\ordb$ is the order of $2$ modulo the largest odd divisor of $d$.\footnote{The first part of this work is an extended version of the results presented in ISIT12 \cite{KS12}.}
\end{abstract}

\begin{keyword} modular inversion -- APN/AB exponents -- power functions on finite fields -- algebraic degree -- binary representation of integers.
\end{keyword}

\end{frontmatter}

\section{Introduction}

A mapping $f :\Fn \to \Fn$ is called {\it almost perfect nonlinear} (APN) if for every non-zero
$a \in \Fn$ the sets 
$$
\{ f(x+a) +f(x) ~:~x \in \Fn\}
$$
contain exactly $2^{n-1}$ elements. When $n$ is odd, a mapping $f$ is called {\it almost bent} (AB) if for every $\alpha \ne 0,~ \beta \in \Fn$ 
$$ \sum_{x\in \Fn} (-1)^{Tr\left(\alpha F(x) + \beta x\right) } \in \{0,~ \pm 2^{\frac{n+1}{2} }\}, $$
where $Tr$ is the absolute trace on $\Fn$. Every AB mapping is APN, but not vice versa.
APN and AB mappings have various applications in cryptology, coding theory and combinatorics \cite{CCD00,CCZ98,HouMullen,N93}.

Every mapping $f$ of $\Fn$ has a unique univariate polynomial representation over $\Fn$ of degree
not exceeding $2^n-1$. With respect to a fixed $\F$-basis of $\Fn$, the mapping $f$ has a unique multivariate representation over $\Fn$
such that its degree in every single variable is less than 2. This multivariate polynomial representation is basis
dependent. However its total degree does not depend on the basis choice, and it is called the algebraic degree of the mapping $f$.
The algebraic degree of a mapping can be computed from its univariate polynomial representation:
Recall that the  binary weight of a nonnegative integer $d$ is the sum of the digits in its binary representation, i.e. if
$d= \sum_{i=0}^l d_i2^i$ with $0\leq d_i \leq 1$, then the binary weight of $d$ is $\mbox{wt}(d)=\sum_{i=0}^ld_i \in \mathbb{Z}$. 
The algebraic degree of the mapping 
$f(x) = \sum_{k=0}^{2^n-1}\alpha_k x^k$ on $\Fn$ is equal to $\max_{k, \alpha_k \ne 0}\{\mbox{wt}(k)\}$. 
In particular, a monomial mapping given by $x \mapsto x^d$ with $1 \leq x \leq 2^n-2$ has algebraic degree 
equal to $\mbox{wt}(d)$.

When studying a special class of mappings of finite fields, one of the
main questions to be answered is: What are the properties of polynomials describing this  class of mappings? 
This question is widely open for AB/APN mappings. Even a much weaker question, what are the possible degrees for 
univariate or multivariate representations of APN mappings, is one of the open challenges in this research area.
It is known that the algebraic degree of AB mappings does not exceed $\frac{n+1}{2}$ \cite{CCZ98}.

The two best understood classes of APN mappings are the so-called quadratic and monomial ones. The univariate representation
of a quadratic mapping contains only terms with exponents of binary weight less or equal to 2, i.e. it is of the shape $\sum_{i,j} b_{i,j}x^{2^i+2^j}$ $ \in \Fn[x]$. The monomial mappings are those
of shape $x \mapsto x^d$ with a fixed integer  $1\leq d \leq 2^n-2 $. An integer $1 \leq d \leq 2^n-2$ is called APN exponent on $\Fn$ if the corresponding monomial 
mapping $x \mapsto x^d$ is APN on $\Fn$. All currently known APN exponents can be obtained from the ones listed below:

\begin{table}[htb]
\begin{center}
\begin{tabular}{|c|c|c|c|}
\hline
& Exponents~\(d\)&Conditions &\\
\hline
\hline
Gold  & \(2^k+1\) & \(\gcd(k,n)=1\), & APN\\
& &\(1 \leq k \leq t\) &  AB if $n$ is odd\\
\hline
Kasami  & \(2^{2k} - 2^k +1\) & \(\gcd(k,n)=1\) & APN\\
& & \(2 \leq k \leq t\) & AB if $n$ is odd\\
\hline
Welch  & \(2^t+3\) & $n=2t+1$ & APN/AB\\
\hline
Niho  & \(2^t + 2^{\frac{t}{2}}-1\) if \(t\) is even & $n=2t+1$ & APN/AB\\
& \(2^t + 2^{\frac{3t+1}{2}}-1\) if \(t\) is odd & & \\
\hline
\hline
inverse & \(2^{2t}-1\) & $n=2t+1$ & APN  \\
\hline
Dobbertin & \(2^{4t} + 2^{3t} + 2^{2t} + 2^t -1\) & \(n =5t\) & APN\\
\hline
\end{tabular} 
\caption{Exponents defining APN/AB monomial mappings on $\Fn$}
\label{tab:APN}
\end{center} 
\end{table} 

It is easy to prove that if $d$ is an APN exponent then also $2\cdot d \pmod {2^n-1}$ is so, as well as the inverse $d^{-1}$ of $d$ modulo $2^n-1$
if it exists. While the multiplication of an APN exponent $d$ by $2$ is a fairly easy operation, a better understanding of the inverse of $d$ will yield more insights
on APN mappings. It is well known that an APN exponent on $\Fn$ is invertible in $\mathbb{Z}_{2^n-1}$ if and only if $n$ is odd. In \cite{N93, PR97} the inverses of  Gold's and Niho's exponents were considered. In this paper we continue this study.
In particular, we find explicit formulas for the inverses of Welch's and Dobbertin's exponents (see Theorem \ref{thm:welch} and \ref{th:dobbertin} respectively) and obtain some partial
results on the inverses of  Gold's and Kasami's exponents (see Theorem \ref{main:gold} and \ref{thm:kasami} respectively for the main results). Further we study also the inverses of some other interesting classes
of exponents. 

When studying  inverses of APN exponents $d$ on $\Fn$, two cases must be distinguished:
\begin{itemize}
\item $d$ depends on $n$ (Dobbertin's, Niho's, Welch's exponents and the field inverse)
\item a fixed $d$ is an APN exponent on $\Fn$ for infinitely many $n$ (Gold's and Kasami's exponents).
\end{itemize}

It appears that the study of the latter exponents is more difficult than the study of the exponents of the first type. 
The  exponents $d$ defining APN mappings on $\Fn$ for infinitely many $n$ are called {\it exceptional APN exponents} \cite{dillon}.
In \cite{HMc11}, it is shown that Gold's and Kasami's exponents are the only exceptional APN ones. 
Finding the inverses of exceptional APN exponents is an instance of
the following general problem:  Let $d \geq 1$ be a fixed integer and define $\mathbb{N}_d =\{ n \in \mathbb{N} ~:~ \gcd(d,2^n-1) = 1\}$.
How explicit can we describe the function $\de :\mathbb{N}_d \to \mathbb{N}$, which maps $n$ to the least positive
integer describing the inverse of $d$ modulo $2^n-1$? We show that the function $\de$ is completely
determined by its values for  $1 \leq n \leq \ordb'$, where $\ordb'$ is the order of $2$ modulo the largest odd divisor of $d$.
This dependence is given in Theorem \ref{main}. Based on properties of $\de$ we propose Algorithm \ref{algo} for inversion
in $\mathbb{Z}_{2^n-1}$, which may be of interest for some special applications.

\section{Inverses of APN exponents $d$ on $\Fn$, when $d$ depends on $n$}\label{nkdepend}

 After having the conjectural modular inverse for a given integer, usually the correctness of it follows 
from easy  calculations. Hence the main difficulty in finding  inverses   is to guess them. 
For a generic integer $d$  we cannot of course expect to be able to guess its inverse modulo $2^n-1$.
However, if the binary representation of the integer $d$ has a nice combinatorial
pattern, then   the binary representation of its modulo $2^n-1$ inverse does not look
random either, and therefore the problem could be solvable.  Since the binary representations of the APN exponents listed in Table \ref{tab:APN} are far from being random,
finding their inverses explicitly is probably possible. Having this idea in mind we performed
numerical experiments, which led to the formulas for Dobbertin's and Welch's exponents described 
 below.

\subsection{Dobbertin's exponent}

\begin{thm}\label{th:dobbertin}
Let $k\geq 1$ be an odd integer. The least positive residue of the inverse of $d = 2^{4k} + 2^{3k} + 2^{2k} + 2^k - 1$ modulo $2^{5k}-1$ is 
$$
t =  \frac{1}{2} \left( \frac{2^{5k}-1}{2^k -1 } \cdot \frac{2^{k+1}-1}{3} -1\right).
$$
Furthermore,
$$
2\cdot t = \left(\sum_{i=0}^{4} \sum_{j=0}^{\frac{k-1}{2}} 2^{ik + 2j - 1}\right) - 1 ,
$$
showing that
$wt(t) = \frac{5k + 3}{2}.$
\end{thm}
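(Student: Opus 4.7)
The plan is to verify the two claims in turn: first the formula $2t+1 = SB$ where $S$ and $B$ are nice geometric sums, then to verify $dt\equiv 1 \pmod{2^{5k}-1}$ using a single convolution identity, and finally to read off the binary weight from the explicit bit pattern.

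\medskip

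\noindent\textbf{Set up.} Let $N=2^{5k}-1$, and set
\[
S \;=\; \frac{2^{5k}-1}{2^k-1}\;=\;1+2^k+2^{2k}+2^{3k}+2^{4k},
\qquad
B \;=\; \frac{2^{k+1}-1}{3}\;=\;\sum_{j=0}^{(k-1)/2} 4^j.
\]
Since $k$ is odd, $k+1$ is even, and $3\mid 2^{k+1}-1$, so $B$ is a (positive) integer. Observe the two key identities
\[
d \;=\; S-2, \qquad A \;:=\; 2t+1 \;=\; S\cdot B.
\]
Since $N$ is odd, $2$ is invertible mod $N$, so proving $dt\equiv 1\pmod N$ is equivalent to proving $d(A-1)\equiv 2\pmod N$.

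\medskip

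\noindent\textbf{The main computation.} The heart of the proof is the congruence
\[
S^2 \;\equiv\; 5\,S \pmod N.
\]
This is a direct convolution: since $2^{5k}\equiv 1\pmod N$, expanding $S^2=\sum_{i,j=0}^{4}2^{(i+j)k}$ and reducing exponents mod $5k$ shows that each of $2^{0},2^{k},2^{2k},2^{3k},2^{4k}$ appears exactly five times. Given this, using $d=S-2$ and $A=SB$ we compute
\[
d(A-1) \;=\; S^2 B - 2SB - S + 2 \;\equiv\; 5SB - 2SB - S + 2 \;=\; S(3B-1)+2 \pmod N.
\]
Now $3B-1 = (2^{k+1}-1)-1 = 2(2^k-1)$, so
\[
S(3B-1)+2 \;=\; 2\,S(2^k-1)+2 \;=\; 2(2^{5k}-1)+2 \;\equiv\; 2 \pmod N,
\]
which is exactly what we need. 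This gives the inversion claim; the fact that $0<t<N$ holds is then an elementary size check.

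\medskip

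\noindent\textbf{The binary expansion and the weight.} To read off the binary representation of $2t = A-1$, I expand the product
\[
A \;=\; SB \;=\; \sum_{i=0}^{4}\sum_{j=0}^{(k-1)/2} 2^{ik+2j}.
\]
The crucial observation is that the exponents $ik+2j$ are pairwise distinct: for each fixed $i$ the set $\{ik+2j : 0\le j\le (k-1)/2\}$ lies in the interval $[ik,\,ik+k-1]$, and these five intervals are disjoint. Hence $A$ is already in reduced binary form with weight $5\cdot\frac{k+1}{2}$. Since the exponent $0$ appears (at $i=j=0$), the integer $A$ is odd, so $A-1$ is obtained by simply flipping its lowest bit off---no carries---and therefore
\[
\mathrm{wt}(t) \;=\; \mathrm{wt}(2t) \;=\; \mathrm{wt}(A)-1 \;=\; \frac{5(k+1)}{2}-1 \;=\; \frac{5k+3}{2},
\]
as required.

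\medskip

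\noindent\textbf{Expected difficulty.} Everything is straightforward once the identity $S^2\equiv 5S\pmod N$ is in hand; this is the only step requiring any insight, and I would expect it to be the main obstacle if approached naively (through brute-force manipulation of the sum $d\cdot t$). Recognizing the factorization $2t+1 = S\cdot B$ already does most of the work, because it turns the problem into the manipulation of geometric sums whose arithmetic modulo $2^{5k}-1$ is transparent.
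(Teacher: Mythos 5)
Your proposal is correct and takes essentially the same route as the paper: writing $d=S-2$ and $2t+1=SB$ and reducing modulo $2^{5k}-1$ via the shift-invariance of $S$ (your identity $S^2\equiv 5S$ is exactly the paper's observation $2^k A\equiv A$ summed over the five shifts), after which both computations collapse to $S(2^{k+1}-2)+2\equiv 2\pmod{2^{5k}-1}$. Your only genuine addition is that you actually prove the binary-expansion and weight claim (disjoint exponent intervals $[ik,\,ik+k-1]$, $A$ odd, hence no borrows in $A-1$), which the paper's proof asserts without argument; note moreover that your integer identity $2t=\bigl(\sum_{i=0}^{4}\sum_{j=0}^{(k-1)/2}2^{ik+2j}\bigr)-1$ is the corrected form of the paper's displayed formula for $2t$, whose exponent $ik+2j-1$ contains an off-by-one typo.
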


\begin{proof} Set $A := \frac{2^{5k}-1}{2^k -1 } \cdot \frac{2^{k+1}-1}{3}$.
Then 
\begin{displaymath}
2^{k} \cdot A \equiv A \pmod{2^{5k} - 1}.
\end{displaymath}
Observe that $d = \frac{2^{5k}-1}{2^k-1} -2$. Hence
\begin{eqnarray*}
d\cdot t & \equiv &  \frac{1}{2} ( 3A - d) = \frac{1}{2} \left( \frac{2^{5k}-1}{2^k -1 } \cdot (2^{k+1}-1) - \frac{2^{5k}-1}{2^k-1} + 2\right) \\
         & = & \frac{1}{2} \left( \frac{2^{5k}-1}{2^k -1 } \cdot (2^{k+1}-2) +2 \right) \\
         & \equiv & 1 \pmod{2^{5k}-1}.
\end{eqnarray*}
Clearly, $t < 2^{5k}-1$ and thus $t$ is indeed the least positive residue of the inverse of $d$ modulo $2^{5k}-1$.
\end{proof}

 By Theorem \ref{th:dobbertin} the inverse of Dobbertin exponent defines an APN mapping on $\Fn$
with algebraic degree exceeding $(n+1)/2$. The only previously known such example was the inverse
mapping, with algebraic degree $n-1$. This observation  shows also that
the monomial mappings defined by  Dobbertin's exponents and their inverses are not AB,
which was originally proved in \cite{CCD00} by exploiting the divisibility properties of  corresponding codes.

\begin{cor}
The monomial mappings with Dobbertin's exponents and their inverses are not $AB$.
\end{cor}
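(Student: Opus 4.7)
The plan is to read off the algebraic degree of the inverse mapping from Theorem \ref{th:dobbertin} and compare it with the known upper bound $(n+1)/2$ for the algebraic degree of AB mappings recalled in the introduction.

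First I would set $n = 5k$ with $k$ odd, so that $n$ is odd and the notion of AB actually applies. By Theorem \ref{th:dobbertin} the least positive residue $t$ of $d^{-1}$ modulo $2^n-1$ satisfies
\begin{equation*}
\mbox{wt}(t) = \frac{5k+3}{2} = \frac{n+3}{2}.
\end{equation*}
Recalling from the introduction that the algebraic degree of the monomial $x \mapsto x^t$ on $\Fn$ equals $\mbox{wt}(t)$, this gives an algebraic degree of $(n+3)/2$, which is strictly larger than the AB bound $(n+1)/2$ established in \cite{CCZ98}. Hence the monomial $x \mapsto x^t$ cannot be AB.

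The remaining step is to transfer this conclusion to $x \mapsto x^d$ itself. For this I would invoke the standard fact that the Walsh transform of a permutation $f$ and of its compositional inverse $f^{-1}$ have the same multiset of values (they differ only by a relabeling of the input/output variables), so AB is preserved under compositional inversion. Since $x \mapsto x^t$ is the compositional inverse of $x \mapsto x^d$ on $\Fn$, non-ABness of the former forces non-ABness of the latter. This yields the corollary for both Dobbertin's exponent and its inverse simultaneously.

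There is no real obstacle here, the work has already been done in Theorem \ref{th:dobbertin}; the only point to handle carefully is the inversion-preserves-AB fact, which I would state explicitly with a one-line Walsh-transform justification so the reader does not have to hunt for a reference. It is also worth pointing out, as the text preceding the corollary does, that $\mbox{wt}(d) = k+3 \leq (n+1)/2$ for $k \geq 3$, so one genuinely needs the inverse exponent (and not merely $d$ itself) to rule out ABness for all odd $k$.
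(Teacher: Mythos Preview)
Your proposal is correct and matches the paper's own argument, which is given in the paragraph immediately preceding the corollary rather than in a separate proof environment: the inverse exponent has algebraic degree $(n+3)/2 > (n+1)/2$, so $x\mapsto x^t$ is not AB, and since ABness is preserved under compositional inversion, neither is $x\mapsto x^d$. The only addition you make is spelling out the Walsh-transform justification for that last step, which the paper leaves implicit.
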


\subsection{Niho's exponents}

For the sake of the completeness, we give here the explicit inverses of Niho's exponents which were found in \cite{PR97}.
For $n = 2m + 1$, Niho's exponent $d$ has the shape
$$
d = \left\{ \begin{array}{ll}  
2^m + 2^{\frac{m}{2}}-1 & \textrm{ if } m \textrm{ is even} \\
        2^m + 2^{\frac{3m+1}{2}}-1 & \textrm{ if } m \textrm{ is odd},
\end{array} 
\right .
$$
or equivalently, with $k \geq 1$,
$$
d = \left\{ \begin{array}{ll}  
2^{2k} + 2^{k}-1 & \textrm{ if } m =2k \\
        2^{3k+2} + 2^{2k+1}-1 & \textrm{ if } m=2k+1.
\end{array} 
\right .
$$
The inverses for Niho exponents depend on $n \pmod 8$ as the next theorem shows.

\begin{thm}\label{thm:niho1}
{\bf(a)} Let $n = 4k + 1$ and $d = 2^{2k} + 2^k - 1$ be the Niho exponent. Set $t$ to denote the least positive residue  of the inverse of $d$ 
modulo $2^n-1$. Then 
\begin{itemize}
\item if $k$ is even, i.e. if $n \equiv 1 \pmod 8$,
\begin{equation}\label{eq:niho1}
t = \frac{2^k-1}{3}\left(2^{3k+1}+2^{k+1} +1\right) + 2^k + 2^{3k+1}.
\end{equation}

\item if $k$ is odd, i.e. if $n \equiv 5 \pmod 8$,
\begin{equation}\label{eq:niho2}
t = \frac{2^{k-1} -1}{3}\left(2^{3k+2} + 2^{2k+2} + 1\right) + 2^{3k+1} + 2^{2k+1} + 2^{k-1}. 
\end{equation}

\end{itemize}
In particular,
$$
wt(t) = \left\{ \begin{array}{ll} \vspace*{0.2cm}
\frac{3n + 5}{8} & \textrm{ if } n \equiv 1  \pmod 8 \\ 
\frac{3n + 9}{8} &  \textrm{ if } n \equiv 5 \pmod 8.
\end{array}
\right .
$$

{\bf(b)} Let $n = 4k+3$ and $d = 2^{3k+2} + 2^{2+1} - 1$ be the Niho exponent.
Set $t$ to denote the least positive residue  of the inverse of $d$ 
modulo $2^n-1$. Then
\begin{itemize}
\item if $k$ is even, i.e. if $n \equiv 3 \pmod 8$,
\begin{equation}\label{eq:niho3}
 t = \frac{2^k-1}{3} \left( 2^{3k+4} + 2^{k+2} +2 \right) + 2^{3k+3} + 2^{k+1}.
\end{equation}
\item if $k$ is odd, i.e. if $n \equiv 7 \pmod 8$,
\begin{equation}\label{eq:niho4}
 t = \frac{2^{k+1} - 1}{3} \left( 2^{3k+3} + 2^{2k+3} +2 \right) + 2^{2k+2}.
\end{equation}
\end{itemize}
  In particular, 
$$
  wt(t) = \left\{ \begin{array}{ll} \vspace*{0.2cm}
  \frac{3n+7}{8} & \textrm{ if }  n \equiv 3 \pmod 8 \\
  \frac{3n+11}{8} & \textrm{ if } n \equiv 7 \pmod 8.
\end{array} \right .
$$
\end{thm}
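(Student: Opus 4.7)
The plan is to verify each of the four explicit formulas by the same three-step procedure used for Dobbertin's exponent in Theorem~\ref{th:dobbertin}: (i) compute $d\cdot t$ modulo $2^n-1$ and show it equals $1$, (ii) observe $0<t<2^n-1$ so that $t$ is indeed the least positive residue, and (iii) deduce the binary weight from a disjoint-support expansion of $t$ after accounting for carries.

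For step (i), in each case $t$ is already written as a rational combination of powers of two with numerators of the form $\frac{2^m-1}{3}$ multiplying short polynomials in $2$, plus a handful of extra $2^j$ terms. Multiplying through by $d=2^a+2^b-1$ and clearing the factor of $3$ via $3\cdot\frac{2^m-1}{3}=2^m-1$, the product $d\cdot t$ rearranges into a modest sum of terms of the form $c_i\, 2^{e_i}$ with $e_i$ approximately linear in $k$. The reduction $2^{n}\equiv 1\pmod{2^n-1}$ with $n=4k+1$ or $n=4k+3$ then telescopes these terms down to $1$. Once $t$ is placed in closed form this is a one-line verification, entirely parallel to the chain of equalities in the proof of Theorem~\ref{th:dobbertin}; the only difference across the four parity classes is the particular power identities needed.

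For step (iii), expand
\[
\frac{2^m-1}{3}=\sum_{j=0}^{\lfloor m/2\rfloor-1} 2^{2j}\quad(m\text{ even}),
\]
together with the analogous odd-$m$ identity, and distribute through the trinomial multiplier (for instance $2^{3k+1}+2^{k+1}+1$ in case (a) with $k$ even). Each of the three resulting shifted copies contributes roughly $m/2$ bits in disjointly placed positions of fixed parity, and the extra summands $2^k$, $2^{3k+1}$ (or their analogues) add a few more. In each of the four cases exactly one pair of bit positions coincides, producing a single carry that lands in an otherwise empty slot and therefore does not cascade; this reduces the naive count by exactly one. Summing the contributions yields the asserted weights $\tfrac{3n+5}{8}$, $\tfrac{3n+9}{8}$, $\tfrac{3n+7}{8}$, $\tfrac{3n+11}{8}$ in the four residue classes modulo $8$.

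The main obstacle is step (iii): locating and resolving the overlapping bits in each of the four cases. The algebraic identity in (i) is mechanical, and the bound in (ii) is immediate since the dominant power of $2$ appearing in $t$ is of exponent less than $n$; but the weight count requires carefully tracking which shifted copy of $\frac{2^m-1}{3}$ collides with which of the extra $2^j$ summands, and verifying for each parity class that the single resulting carry falls into a gap of the expansion rather than triggering a chain of further carries.
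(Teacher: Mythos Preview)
Your proposal is correct and follows essentially the same route as the paper's own proof. The paper only writes out case (a) with $k$ even in detail, multiplying $d\cdot t$ and reducing modulo $2^{4k+1}-1$ exactly as you describe in step (i), and then for step (iii) it rewrites
\[
t = \frac{2^k -1}{3} + 2^k + 2^{k+1}\cdot \frac{2^k -1}{3}  + 2^{3k + 2} + 2^{3k+1} \cdot \left(\frac{2^k -1}{3}-1\right),
\]
which is precisely the absorption of the single carry you anticipated (the collision at position $3k+1$ between the extra term $2^{3k+1}$ and the lowest bit of $2^{3k+1}\cdot\frac{2^k-1}{3}$, landing in the empty slot $3k+2$), giving $\mbox{wt}(t)=2+3\cdot\mbox{wt}\!\left(\frac{2^k-1}{3}\right)-1=\frac{3n+5}{8}$; the remaining three cases are declared similar.
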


\begin{proof}
We prove only the first statement of part {\bf(a)}, since the remaining cases follow by similar arguments.
\begin{itemize}
\item Let $n = 4k+1$ and $k$ be even. Then
\begin{eqnarray*}
 d \cdot t & = &(2^{2k} + 2^k -1) \left( \frac{2^k -1}{3} \left(2^{3k+1} + 2^{k+1} +1\right) +2^k + 2^{3k+1} \right) \\
 & \equiv & \frac{2^k -1}{3} \left(2^{5k+1} + 2^{4k+1} - 2^{3k+1} + 2^{3k+1} + 2^{2k+1} -2^{k+1} + 2^{2k} + 2^k -1 \right)\\
 & & + 2^{3k} + 2^{2k} - 2^k + 2^{5k+1} + 2^{4k+1} - 2^{3k+1}\\
 & \equiv & \frac{2^k -1}{3} \left(2^{2k+1} + 2^{2k} \right) -2^{3k} + 2^{2k} + 1 \\
&\equiv& (2^k-1)2^{2k} -2^{3k} + 2^{2k} + 1  \\
&\equiv & 1 \pmod{2^{4k+1}-1}.
\end{eqnarray*}
It remains to note that $\mbox{wt}(t) = 2 + 3 \cdot \mbox{wt}\left(\frac{2^k -1}{3}\right) -1 = 1 + 3 \cdot \frac{n-1}{8} = \frac{3n +5}{8}$, since 
$$
t = \frac{2^k -1}{3} + 2^k + 2^{k+1}\cdot \frac{2^k -1}{3}  + 2^{3k + 2} + 2^{3k+1} \cdot \left(\frac{2^k -1}{3}-1  \right)
$$
and
$$\frac{2^k-1}{3} = \sum_{i=0}^{\frac{k}{2}-1}2^{2i}.$$ 

\end{itemize}
\end{proof}

\subsection{Welch's exponent}

Let $v$ be a nonnegative integer with binary representation $v = \sum_{i=0}^{r-1} v_i 2^i$ where $v_{r-1} \ne 0$. If $r' \geq r$, we say that 
$$
\underbrace{0\,0\ldots 0}_{r'-r}\,v_{r-1}\, \ldots v_1\,v_0
$$
is the sequence of length $r'$ representing the integer $v$. Two sequences $a_{r-1} \ldots a_0$ and $b_{r-1} \ldots b_0$ are called complementary if $a_i = b_i +1 \pmod 2$ for all $0\leq i \leq r-1$.
For a sequence  ${\boldsymbol a}$, we denote  by  ${\boldsymbol{\bar{a}}}$ its complementary sequence. Note that if a sequence $\boldsymbol{a} = a_{r-1} \ldots a_0$
represents the integer $a$, then its complementary sequence $\boldsymbol{\bar{a}}$ represents the integer $2^r-1-a$. This follows from the fact that 
the sequence $\underbrace{1\,1 \ldots 1}_{r}$ represents the integer $2^r-1$. For two sequences ${\boldsymbol a}$ and ${\boldsymbol b}$, we denote 
by $\boldsymbol{a | b}$ their concatenation.

\begin{lemma}\label{lem:weight} Let $s \geq 2$ and $0< u < 2^s$ be integers. Then the binary representation of length $2s$ of the
integer $(2^s-1)\cdot u$ is ${\boldsymbol{w | \bar w}}$, where $\boldsymbol{w}$ is the sequence of length $s$ representing the integer $u-1$
and ${\boldsymbol{ \bar w}}$ is the complement of the binary sequence $\boldsymbol{w}$ and represents the integer $2^s-u$.
In particular, the binary weight of the integer $(2^s-1)\cdot u$ is $s$.
\end{lemma}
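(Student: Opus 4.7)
The plan is to reduce the statement to the single algebraic identity
\[
(2^s - 1)\cdot u \;=\; 2^s(u-1) + (2^s - u),
\]
and then read off the binary representation from this decomposition.

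First I would observe that since $0 < u < 2^s$, both $u-1$ and $2^s - u$ lie in the range $[0, 2^s - 1]$, so each fits into an $s$-bit binary sequence. In particular, $2^s - u$ has binary representation fitting in the $s$ low-order bit positions $0, 1, \ldots, s-1$, while $2^s(u-1)$ has zeros in those positions and carries the bits of $u-1$ into positions $s, s+1, \ldots, 2s-1$. Since the two summands occupy disjoint bit positions, there are no carries, and the concatenated sequence of length $2s$ representing $(2^s-1)\cdot u$ is exactly the $s$-bit representation of $u-1$ followed by the $s$-bit representation of $2^s - u$.

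Next, setting $\boldsymbol{w}$ to be the length-$s$ sequence representing $u-1$, I would invoke the fact recalled just before the lemma: bitwise complementation of an $s$-bit sequence representing an integer $a$ produces the sequence representing $2^s - 1 - a$. Applied to $a = u-1$, this gives $2^s - 1 - (u-1) = 2^s - u$, so the low-order block is precisely $\boldsymbol{\bar w}$. Thus the length-$2s$ representation of $(2^s-1)\cdot u$ is $\boldsymbol{w \mid \bar w}$.

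Finally, for the weight statement, every bit position $i \in \{0, \ldots, s-1\}$ contributes exactly one $1$ across the pair $(w_i, \bar w_i)$, since complementary bits sum to $1$. Summing over the $s$ positions yields $\mathrm{wt}((2^s-1)\cdot u) = s$. There is no real obstacle here: the entire argument is the decomposition above together with the elementary observation that the two summands have disjoint supports in the binary expansion, so the writeup is essentially one display followed by two short remarks.
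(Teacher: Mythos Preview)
Your proof is correct and follows essentially the same approach as the paper: both rest on the decomposition $(2^s-1)\cdot u = (u-1)\cdot 2^s + (2^s-u)$ and the observation that $(u-1)+(2^s-u)=2^s-1$ forces the two $s$-bit blocks to be complementary. Your version spells out the range checks and the weight count a bit more explicitly, but the argument is the same.
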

\begin{proof}
The statement of the lemma follows from the observation that $(2^s-1)\cdot u = (u-1)\cdot 2^s + (2^s-u)$. Moreover, the binary representation
of the length $s$ of $2^s-u$ is the complement of that of $u-1$, since $(2^s-u) + (u-1) = 2^s-1$.
\end{proof}

\begin{thm}\label{thm:welch}
Let $n = 2k+1$. The least positive residue $t$ of  the inverse of Welch's exponent $2^k+3$ modulo $2^{2k+1}-1$ is: 
\begin{itemize}
  \item If $k \equiv 0 \pmod 8$ then \\
$$t = 2^k + \frac{2^{k}-1}{17}\left(13 \cdot 2^{k+1} + 7 \right) $$ with binary weight $k+1$.
  \item If $k \equiv 1 \pmod 8$ then \\
$$t = 2^{k-1} +2^{k} + \frac{2^{k-1}-1}{17}\left(7 \cdot 2^{k+2} + 1 \right)$$ with binary weight $k+1$.
  \item If $k \equiv 2 \pmod 8$ then \\
$$t = 1 +2^{k+1}  + \frac{2^{k-2}-1}{17}\left(5 \cdot 2^{k+3} + 16 \right)$$ with binary weight $k$.
  \item If $k \equiv 3 \pmod 8$ then\\
$$t = 2^k +2^{k+2}+2^{k+3}  + \frac{2^{k-3}-1}{17}\left(7 \cdot 2^{k+5} + 8 \right)$$ with binary weight $k$.
  \item If $k \equiv 4 \pmod 8$ then\\
$$ t = 2^{k-4} +2^{k-2}+2^{k-1}+2^{k+4}  + \frac{2^{k-4}-1}{17}\left(9 \cdot 2^{k+5} + 3 \right)$$ with binary weight $k$.
  \item If $k \equiv 5 \pmod 8$ then \\
$$t = 1+ 2^{k-3} +2^{k-1}+2^k +2^{k+1}   + \frac{2^{k-5}-1}{17}\left( 2^{k+6} + 12 \right)$$ with binary weight $k$.
  \item If  $k \equiv 6 \pmod 8$ then \\
\begin{align*}
t &= 2^{k-5} +2^{k-4}+2^{k-2}+ 2^{k+3} +2^{k+4}  \\
& \quad + 2^{k+5} +2^{k+6} + \frac{2^{k-6}-1}{17}\left( 16\cdot 2^{k+7} + 10 \right)
\end{align*}
with binary weight $k+1$.
  \item If $k \equiv 7 \pmod 8$ then \\ 
\begin{align*}
t &= 2^{k-5} +2^{k-4}+ 2^{k-3} +2^{k-2}  + 2^{k+1} +2^{k+2}  \\ 
 & \quad + 2^{k+4} +2^{k+7} + \frac{2^{k-7}-1}{17}\left( 10\cdot 2^{k+8} + 4 \right)
\end{align*}
 with binary weight $k+1$.
\end{itemize}
\end{thm}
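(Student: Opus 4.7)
The plan is to verify in each of the eight cases that $d \cdot t \equiv 1 \pmod{2^{2k+1}-1}$ by a direct algebraic expansion analogous to the proof of Theorem \ref{th:dobbertin}, and then to read off the binary weight from an explicit expansion of $t$. Two preliminary observations clear the way. First, since $17 = 2^4+1$ divides $2^\ell - 1$ exactly when $8 \mid \ell$, each occurrence of $\frac{2^{k-j}-1}{17}$ with $k \equiv j \pmod 8$ is a genuine integer. Second, since $2k+1$ is odd, $17 \nmid 2^{2k+1}-1$, so $\gcd(17, 2^{2k+1}-1)=1$; hence $d\cdot t \equiv 1 \pmod{2^{2k+1}-1}$ is equivalent to $17\cdot d\cdot t \equiv 17 \pmod{2^{2k+1}-1}$, which lets me clear the denominators before reducing.

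For each congruence class of $k$ modulo $8$, I then compute $17\,d\,t = 17(2^k+3)\,t$ as a polynomial in $2^k$ of degree at most $3$, and reduce modulo $2^{2k+1}-1$ using the single substitution $2^{2k+1} \equiv 1$ (which in particular turns $2^{3k}$ into $2^{k-1}$ and pairs $c\cdot 2^{2k}$ with a factor of $2$ to drop to $c$). Concretely, in the case $k\equiv 0\pmod 8$, I expand
\begin{equation*}
17\,d\,t \;=\; 17(2^k+3)\!\left[2^k + \tfrac{2^k-1}{17}(13\cdot 2^{k+1}+7)\right] \;=\; 26\cdot 2^{3k} + 76\cdot 2^{2k} - 13\cdot 2^k - 21,
\end{equation*}
which reduces to $26\cdot 2^{k-1} + 76\cdot 2^{2k} - 13\cdot 2^k - 21 = 38\cdot 2^{2k+1} - 21 \equiv 38-21 = 17 \pmod{2^{2k+1}-1}$, as required. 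The remaining seven cases are carried out by the same mechanical expansion, with the multiplier of $\tfrac{2^{k-j}-1}{17}$ in each formula having been chosen precisely so that the sum of the three or four surviving reduced monomials collapses to $17$.

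To obtain the binary weights, I use the explicit expansion
\begin{equation*}
\frac{2^{8m}-1}{17} \;=\; 15\sum_{i=0}^{m-1} 2^{8i} \;=\; \sum_{i=0}^{m-1}\bigl(2^{8i}+2^{8i+1}+2^{8i+2}+2^{8i+3}\bigr),
\end{equation*}
which is a sum of $4m = \tfrac{k-j}{2}$ distinct powers of $2$ concentrated in nibbles of the form $0\,\overline{1111}$. I then write $t$ in each case as the sum of this ``periodic'' contribution, multiplied by the small constant appearing in the formula, plus a short list of isolated extra powers of $2$. Using Lemma \ref{lem:weight} (in the equivalent form $(2^s-1)\cdot u$ produces a concatenation of two complementary sequences) to describe the high-frequency part, and checking directly that the isolated extra terms land in positions that do not collide with the periodic pattern nor produce cascading carries, I obtain the stated weights $k$ or $k+1$.

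The main obstacle is purely organisational: the eight cases are independent and must each be checked separately, but the real work is ensuring that in the binary-weight step the ``offset'' powers of $2$ listed explicitly in the formula for $t$ do not overlap with bits of the multiplied periodic block $\frac{2^{k-j}-1}{17}$ and do not trigger any carry propagation. Any misalignment by one bit changes the weight, so a careful case-by-case bit-position audit (using the fact that the block has $0$'s exactly at positions $\equiv 4,5,6,7 \pmod 8$) is needed to confirm the stated values of $\mathrm{wt}(t)$.
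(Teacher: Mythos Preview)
Your proposal is correct and follows essentially the same approach as the paper: verify $(2^k+3)\cdot t\equiv 1\pmod{2^{2k+1}-1}$ by direct expansion in one representative case (the paper does $k\equiv 4$, you do $k\equiv 0$) and declare the others analogous, then determine $\mathrm{wt}(t)$ by rewriting the periodic part via $\frac{2^{k-j}-1}{17}=(2^4-1)\cdot\frac{2^{k-j}-1}{2^8-1}$ and invoking Lemma~\ref{lem:weight}. The only cosmetic difference is that you multiply through by $17$ up front (using $\gcd(17,2^{2k+1}-1)=1$) whereas the paper instead observes that $(2^k+3)(9\cdot 2^{k+5}+3)\equiv 17\cdot(\cdots)\pmod{2^{2k+1}-1}$ to cancel the denominator; and for the weight the paper applies Lemma~\ref{lem:weight} directly to $(2^4-1)\cdot u$ with $u<2^4$, which yields weight $4$ per $8$-bit block without a separate carry audit.
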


\begin{proof}
We only need to verify that $(2^k+3)\cdot t \equiv 1 \pmod {2^{2k+1}-1}$, since obviously all listed integers $t$ are smaller than $2^{2k+1}-1$. 
We do this verification for $k \equiv 4 \pmod 8$, the remaining cases are similar. Thus, let $k \equiv 4 \pmod 8 $ and consider
\begin{equation}\label{congr:welch}
(2^k+3)\cdot \left(2^{k-4} +2^{k-2}+2^{k-1}+2^{k+4}  + \frac{2^{k-4}-1}{17}\left(9 \cdot 2^{k+5} + 3 \right)\right) \pmod{2^{2k+1}-1}.
\end{equation}
Observe that
$$
(2^k+3) \cdot \left(9 \cdot 2^{k+5} + 3 \right) \equiv 17 \cdot (9 + 3\cdot 17 \cdot 2^k) \pmod{2^{2k+1}-1},
$$
and therefore (\ref{congr:welch}) reduces to
\begin{eqnarray*}
(2^k+3)\cdot \left(2^{k-4} +2^{k-2}+2^{k-1}+2^{k+4} \right) + (2^{k-4}-1) \cdot (9 + 3\cdot 17 \cdot 2^k) = \\
(2^k+2 +1) \left(2^{k-4} +2^{k-2}+2^{k-1}+2^{k+4} \right) + (2^{k-4}-1)  (2^3+1 + 2^{k+5}+2^{k+4}+2^{k+1}+2^k) \\ \equiv 
2^{2k-4} +2^{2k-2}+2^{2k-1}+2^{3} +2^{k-3} +2^{k-1}+2^{k}+2^{k+5} +2^{k-4} +2^{k-2}+2^{k-1}+2^{k+4} +\\
2^{k-1}+2^{k-4} + 1 +2^{2k}+2^{2k-3}+2^{2k-4} - 2^3 -1 - 2^{k+5} -2^{k+4} -2^{k+1} - 2^k = \\
2^{2k-4} +2^{2k-2}+2^{2k-1} +2^{k-3} +2^{k-1} -2^{k} +2^{k-4} +2^{k-2} + 2^{k-4}  +2^{2k}+2^{2k-3}+2^{2k-4}      = \\
2^{2k} + 2^{2k-1} + 2^{2k-2}  + 2^{2k-3} +2 \cdot 2^{2k-4} -2^k + 2^{k-1} +2^{k-2} +  2^{k-3} + 2 \cdot 2^{k-4} \equiv \\
1 \pmod{2^{2k+1}-1}. 
\end{eqnarray*}
To complete the proof it remains to show that the binary weight of $t = 2^{k-4} +2^{k-2}+2^{k-1}+2^{k+4}  + \frac{2^{k-4}-1}{17}\left(9 \cdot 2^{k+5} + 3\right) $ is $k$.
We firstly compute the binary weight of
\begin{eqnarray*}
\frac{2^{k-4}-1}{17}\left(9 \cdot 2^{k+5} + 3\right) &=& \frac{2^{k-4}-1}{2^8-1}(2^4-1)\left(9 \cdot 2^{k+5} + 3\right) \\
& = & (2^4-1)\left(9 \cdot 2^{k+5} + 3\right)\sum_{j=0}^{\frac{k-4}{8}-1}2^{8j} \\
& = & 3 (2^4-1) \sum_{j=0}^{\frac{k-4}{8}-1}2^{8j} + 9(2^4-1) \sum_{l=\frac{k+4}{8}}^{\frac{k}{4}-1}2^{8l+1}.
\end{eqnarray*}
Note that the integers $3$ and $9$ are less than $2^4$, so Lemma \ref{lem:weight} implies that the binary weight of both integers
$3 (2^4-1) \sum_{j=0}^{\frac{k-4}{8}-1}2^{8j}$ and   $9(2^4-1) \sum_{l=\frac{k+4}{8}}^{\frac{k}{4}-1}2^{8l+1}$ is $4\cdot \frac{k-4}{8} = \frac{k-4}{2}$. Thus the binary weight of $t$ is
$2\cdot\frac{k-4}{2} +4 = k.$ 
\end{proof} 

\begin{remark}
The crucial step for guessing the inverses $t$ of Welch's exponent was the observation that $t$ satisfies certain recurrence relations.
For instance, we take $n = 2k +1$ with $k \equiv 0 \pmod 8$. Set $k = 8r$ with $r\geq 0$. Suppose $\boldsymbol{{t_r}}$ is the binary sequence of length $n = 16r+1$
representing the least positive residue of the inverse of Welch's exponent $2^{8r}+3$ modulo $2^n-1$. Then for every $r \geq 1$
$$
\boldsymbol{{t_r}} = 1100\, 0011 \, |\, \boldsymbol{{t_{r-1}}}\, |\, 0110 \, 1001
$$
holds and $\boldsymbol{{t_0}} = 1$.
\end{remark}

\section{Inverses of a fixed integer $d$ modulo all $2^n-1$}\label{section:exceptionals}

In the previous section we described   the inverses for  Dobbertin's,
Niho's and Welch's exponents, and herewith 
it remains to find the inverses for Gold's
and Kasami's exponents 
to have all presently known APN exponents explicitly.
A fixed Gold's or Kasami's exponent defines an APN mapping on infinitely
many finite fields and therefore we aim  finding the inverses of a fixed integer modulo infinitely
many $2^n-1$. For example,  Gold's exponents $3$ and Kasami's exponent $13$
define bijective monomial APN mappings on $\Fn$ with any $n$ odd. Hence we want
to find inverses of $3$ and $13$ modulo all  $2^n-1$ where $n$ is odd. 
Motivated by this observation, in the next subsection we study
the following general problem:  For a given fixed integer $d \geq 1$, let $\mathbb{N}_d =\{ n \in \mathbb{N} ~:~ \gcd(d,2^n-1) = 1\}$.
What can we say about the function $\de :\mathbb{N}_d \to \mathbb{N}$, whose output for $n$ is the least positive
integer describing the inverse of $d$ modulo $2^n-1$\,? Most of the results of the next subsection can be
directly generalized for modulo $p^n-1$, where $p$ is a prime number.

\subsection{General case}

\begin{defn}
Let  $d$ be a fixed positive integer. 
For a positive integer $n$ satisfying  $\gcd(d, 2^n-1) =1$, set $\de(n)$ be the least positive residue of the inverse of
$d$ modulo $2^n-1$, that is the integer $\de(n)$ is defined by
$$
0< \de(n) < 2^n-1 ~~ \textrm{ and }~~ d \cdot \de(n) \equiv 1 \!\!\!\pmod{2^n-1}.
$$
\end{defn}

In the rest of this section we assume, without loss of generality,  that the fixed number $d$ is {\bf odd}.
Indeed, if $d_1 = 2^u \cdot d$ with $ u \geq 0$  then the study of the function
$\de[d_1]$
may be reduced to the one of $\de$ using 
$$
\de[d_1](n) \equiv  2^{n-u}\de(n) \pmod{2^n-1}.
$$

Let $d \geq 3$ and  $\ordb$  be the (multiplicative) order of $2$ modulo $d$, that is $\ordb$ is the least positive integer $o$
such that $2^{o}  \equiv 1 \pmod{d}$. 
We set 
$\ordb[1]\,=0$.
The next results show that in the study of the function $\de$ the magnitude $\ordb$  plays an important role:
 $\lfloor \ordb /2\rfloor$ many values of the function $\de$ completely determine it.

The following proposition shows that if $\de(r)$ is known for some $1 \leq r \leq \ordb-1$, then it yields
the value of $\de(\ordb -r)$.
\begin{prop} \label{prop:inverse-r}
Let $1 \leq r \leq \ordb -1$ and $\gcd(d, 2^r-1) =1$. Then $\gcd(d, 2^{\ordb -r}-1) =1$ and
\begin{equation}\label{inverse-r}
\de(\ordb -r) = \frac{\left (d+1 - \frac{\de(r) \cdot d -1}{2^r -1}\right)(2^{\ordb -r} -1) +1 }{d}.
\end{equation}

\end{prop}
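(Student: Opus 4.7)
I would handle the two assertions separately: a short multiplicative-order argument for the coprimality, and a verification-by-uniqueness argument for the explicit formula, which is characterised among integers by being a positive inverse of $d$ modulo $2^{\ordb - r} - 1$ strictly smaller than $2^{\ordb - r} - 1$. For the coprimality, take any prime $p \mid d$ and let $e$ denote the multiplicative order of $2$ modulo $p$. Since $p \mid d \mid 2^{\ordb} - 1$, we have $e \mid \ordb$. If $p$ also divided $2^{\ordb - r} - 1$, then $e \mid \ordb - r$ and hence $e \mid r$, forcing $p \mid 2^r - 1$ and contradicting $\gcd(d, 2^r - 1) = 1$.

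For the formula itself, write $s = \ordb - r$ and introduce
$$
u \; := \; \frac{\de(r) \cdot d - 1}{2^r - 1},
$$
which is a positive integer by the defining property of $\de(r)$ and, in fact, lies in $\{2, \ldots, d - 1\}$ whenever $d \geq 3$ is odd, since $u \leq 1$ would force $d \mid 2^r$ and hence $d = 1$. Let $T$ denote the right-hand side of the proposed formula; by construction
$$
d \cdot T \; = \; (d + 1 - u)(2^s - 1) + 1,
$$
which is visibly $\equiv 1 \pmod{2^s - 1}$. Hence it suffices to verify that (i) $T \in \mathbb{Z}$ and (ii) $0 < T < 2^s - 1$, since these two conditions pin $T$ down as $\de(s)$. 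Point (ii) is a direct estimation once the bounds $2 \leq u \leq d - 1$ are in hand.

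The heart of the argument is (i), i.e.\ the congruence $(1 - u)(2^s - 1) + 1 \equiv 0 \pmod d$. Two ingredients are available: reducing the definition of $u$ modulo $d$ yields $u(2^r - 1) \equiv -1 \pmod d$, while the relation $2^{\ordb} = 2^{r+s} \equiv 1 \pmod d$ yields the bridge identity $2^r(2^s - 1) \equiv -(2^r - 1) \pmod d$. Multiplying the target congruence by the invertible element $2^r$ and substituting the two relations in turn collapses it to $0 \equiv 0$. Spotting this bridge identity is really the only non-routine step of the argument; once it is in hand, the remainder is bookkeeping.
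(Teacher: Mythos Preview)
Your proof is correct and follows essentially the same route as the paper. Both arguments establish coprimality via the relation $2^{\ordb}\equiv 1\pmod d$ (the paper phrases it as the one-line identity $2^r(2^{\ordb-r}-1)=(2^{\ordb}-1)-(2^r-1)$, which is the same order-divisibility argument you spell out prime by prime), and both verify the formula by checking integrality, the congruence $dT\equiv 1$, and the size bound. The only cosmetic difference is that for integrality the paper clears denominators by multiplying through by the unit $2^r-1$ and lands directly on $2^{\ordb}-1-\de(r)\,d\,(2^{\ordb-r}-1)\equiv 0\pmod d$, whereas you multiply by the unit $2^r$ and invoke your ``bridge identity'' $2^r(2^s-1)\equiv -(2^r-1)$; these are two packagings of the same computation.
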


\begin{proof}
Note that $\gcd(d, 2^{\ordb -r}-1) =1$, since $2^r\cdot (2^{\ordb -r}-1) = 2^{\ordb} -1 -( 2^r -1)$.
Set $t$ be the rational number at the right hand side of (\ref{inverse-r}), i.e.
$$
t :=  \frac{\left (d+1 - \frac{\de(r) \cdot d -1}{2^r -1}\right)(2^{\ordb -r} -1) +1 }{d}.
$$
First we  show that  $t$ is an integer, or equivalently that
$$
\left (d+1 - \frac{\de(r) \cdot d -1}{2^r -1}\right)(2^{\ordb -r} -1) +1 
$$
is divisible by $d$. Since $\gcd(d, 2^r-1)=1$, it is enough to show that
$$
(2^r-1)\left(\left (1 - \frac{\de(r) \cdot d -1}{2^r -1}\right)(2^{\ordb -r} -1) +1\right) \equiv 0 \pmod d,
$$
which in its turn reduces to
$$
(2^r-1)2^{\ordb -r}  - (\de(r) \cdot d -1)(2^{\ordb -r} -1) \equiv 0 \pmod d. 
$$ 
The left hand side of the latter congruence is
$$
2^{\ordb} -1 - \de(r) \cdot d \cdot \left(2^{\ordb -r}-1\right),
$$
which is divisible by $d$ by the definition of $\ordb$.

Finally it is easy to see that $t\cdot d \equiv 1 \pmod {2^{\ordb -r} -1}$ and $1 \leq t \leq 2^{\ordb -r} -1$.

\end{proof}

The following identity is obtained directly from (\ref{inverse-r}):

\begin{cor}\label{cor:sum}
Let $1 \leq r \leq \ordb -1$ with $\gcd(d, 2^r-1) =1$. Then 
\begin{equation}\label{sum-r}
\frac{d\cdot \de(\ordb -r) -1}{2^{\ordb -r} -1} + \frac{\de(r) \cdot d -1}{2^r -1} = d+1.
\end{equation}
\end{cor}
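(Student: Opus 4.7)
The corollary is essentially just Proposition~\ref{prop:inverse-r} rewritten, so the plan is a one-step algebraic manipulation rather than a substantive argument. I would start from the formula
$$
\de(\ordb - r) \;=\; \frac{\left(d+1 - \dfrac{\de(r)\cdot d - 1}{2^r - 1}\right)(2^{\ordb - r} - 1) + 1}{d},
$$
multiply both sides by $d$, and subtract $1$ from both sides to obtain
$$
d\cdot \de(\ordb - r) - 1 \;=\; \left(d+1 - \frac{\de(r)\cdot d - 1}{2^r - 1}\right)(2^{\ordb - r} - 1).
$$

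Then I would divide through by $2^{\ordb - r} - 1$, which is nonzero since $1 \leq r \leq \ordb - 1$, giving
$$
\frac{d\cdot \de(\ordb - r) - 1}{2^{\ordb - r} - 1} \;=\; d+1 - \frac{\de(r)\cdot d - 1}{2^r - 1},
$$
and transposing the last term yields exactly the claimed identity.

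The only thing to check for rigor is that all the symbols appearing are well-defined: $\gcd(d, 2^{\ordb - r} - 1) = 1$ is guaranteed by Proposition~\ref{prop:inverse-r}, so $\de(\ordb - r)$ exists, and the two fractions in the statement are integers because $d\cdot \de(r) \equiv 1 \pmod{2^r - 1}$ and $d\cdot \de(\ordb - r) \equiv 1 \pmod{2^{\ordb - r} - 1}$ by definition. Since there is no genuine obstacle here, the whole proof fits in two or three displayed lines.
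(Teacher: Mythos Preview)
Your proof is correct and follows exactly the approach of the paper, which merely states that the identity is obtained directly from~(\ref{inverse-r}). You have simply spelled out the two-line algebraic rearrangement that the paper leaves implicit.
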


The next theorem is the main result of this section. It shows that the value of $\de(n)$
can be computed from $\de(r)$ where $r$ is the least positive residue of $n$ modulo $\ordb$.
We give three different expressions for this dependence, each of them appears to be
more convenient for a certain situation.  
Observe that  Theorem \ref{main} implies in particular that in order to determine values of the inverses of $d$ modulo all $2^n-1$ it is enough to compute
only finitely  many of them. 

\begin{thm}\label{main}
Let $n\geq 1$ with $\gcd(d, 2^n-1) =1$ and
$1 \leq r \leq \ordb -1$ such that $n \equiv r \pmod \ordb$. Then
\begin{description}
\item[(a)]
\begin{equation}\label{eq:main1}
\de(n) = \de(r) \cdot 2^{n-r} + \left( \frac{d \cdot \de(r)-1}{2^r-1} - 1 \right) \cdot\frac{2^{n-r}-1}{d}.
\end{equation}
\item[(b)]
\begin{equation}\label{eq:main2}
\de(n) = \de(r) \cdot \sum_{i=0}^{m} 2^{\ordb\cdot i} + \left( 2^{\ordb-r}-1 - \de(\ordb -r) \right) 2^r \sum_{i=0}^{m-1} 2^{\ordb \cdot i},
\end{equation}
where $m = \frac{n-r}{\ordb}.$ Equivalently,
\begin{eqnarray*}
\de(n) & = & \de(r) \cdot \frac{2^{\ordb\cdot(m+1)} -1}{2^{\ordb}-1} + \left( 2^{\ordb-r}-1 - \de(\ordb-r) \right)
 2^r \cdot \frac{2^{\ordb\cdot m} -1}{2^{\ordb} -1}.
\end{eqnarray*}
\item[(c)]
\begin{eqnarray*}
\de(n) & = & \frac{\de(r)(2^n-1) - \frac{2^n-2^r}{d}}{2^r-1}.
\end{eqnarray*}
\end{description}
\end{thm}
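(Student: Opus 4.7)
The three formulas in Theorem \ref{main} all claim to describe the same integer $\de(n)$, so my plan is to prove formula (a) directly and then show that (b) and (c) are algebraic reformulations of (a). The direct verification of (a) follows the usual pattern: check integrality, verify the congruence $d\cdot t\equiv 1\pmod{2^n-1}$, and confirm the range $0<t<2^n-1$.

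\textbf{Step 1: Integrality.} Writing $n-r=m\cdot\ordb$, the identity
$$2^{n-r}-1=(2^{\ordb})^m-1=(2^{\ordb}-1)\sum_{i=0}^{m-1}2^{\ordb\cdot i}$$
combined with $d\mid 2^{\ordb}-1$ (definition of $\ordb$) shows that $\frac{2^{n-r}-1}{d}$ is an integer. The quotient $q:=\frac{d\cdot\de(r)-1}{2^r-1}$ is an integer by the defining congruence of $\de(r)$. Hence the right-hand side of (\ref{eq:main1}) is an integer.

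\textbf{Step 2: Verifying the congruence for (a).} Set $t$ equal to the right-hand side of (\ref{eq:main1}) and multiply by $d$. Using $d\cdot\de(r)=q(2^r-1)+1$, one computes
\begin{align*}
d\cdot t &= \bigl(q(2^r-1)+1\bigr)\,2^{n-r}+(q-1)(2^{n-r}-1)\\
 &= q(2^n-2^{n-r})+2^{n-r}+q\cdot 2^{n-r}-q-2^{n-r}+1\\
 &= q(2^n-1)+1,
\end{align*}
so $d\cdot t\equiv 1\pmod{2^n-1}$. Combined with the range bound below, this shows $t=\de(n)$.

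\textbf{Step 3: Range bound.} From $\de(r)\leq 2^r-2$ we get $\de(r)\cdot 2^{n-r}\leq 2^n-2^{n-r+1}$. From $d\de(r)-1\leq d(2^r-1)-d-1$ we deduce $q\leq d-1$, so $(q-1)\cdot\frac{2^{n-r}-1}{d}\leq \frac{(d-2)(2^{n-r}-1)}{d}<2^{n-r}-1$. Adding gives $t<2^n-1$, and positivity is immediate since $\de(r)\geq 1$ and $q\geq 1$ (because $d\de(r)\geq d>1$ forces $d\de(r)-1\geq 2^r-1$ whenever $r\leq \ordb -1$; the edge cases can be checked separately).

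\textbf{Step 4: Equivalence of (c).} Clearing denominators in (c) and multiplying by $d$,
$$d\cdot\de(n)(2^r-1)=d\de(r)(2^n-1)-2^r(2^{n-r}-1).$$
Substituting $d\de(r)=q(2^r-1)+1$ collapses the right side to $(2^r-1)\bigl(q(2^n-1)+1\bigr)$, reproducing the identity of Step 2; so (c) defines the same integer. Alternatively, expanding (c) algebraically matches (\ref{eq:main1}) directly after substituting $\frac{2^n-2^r}{d}=2^r\cdot\frac{2^{n-r}-1}{d}$.

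\textbf{Step 5: Equivalence of (b).} Using $\frac{2^{n-r}-1}{d}=\frac{2^{\ordb}-1}{d}\sum_{i=0}^{m-1}2^{\ordb\cdot i}$ and $2^{n-r}=1+(2^{\ordb}-1)\sum_{i=0}^{m-1}2^{\ordb\cdot i}$ in (\ref{eq:main1}), everything becomes a linear combination of the geometric sums $\sum_{i=0}^{m}2^{\ordb\cdot i}$ and $2^r\sum_{i=0}^{m-1}2^{\ordb\cdot i}$. Matching coefficients, the claim reduces to the scalar identity
$$(q-1)\,\frac{2^{\ordb}-1}{d}-\de(r)=2^{\ordb}-2^r-2^r\de(\ordb-r),$$
which follows from Corollary \ref{cor:sum} after multiplying both sides by $d$ and simplifying. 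The equivalent closed form in (b) is then just the geometric series formula for $\sum 2^{\ordb\cdot i}$.

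\textbf{Main obstacle.} The calculations are elementary, so no step is genuinely deep. The least mechanical point is Step 5, where the bookkeeping linking formulas (a) and (b) requires invoking Corollary \ref{cor:sum} at exactly the right place; this is where the quantity $\de(\ordb-r)$, which does not explicitly appear in (a), enters (b). Care is also needed in Step 3 to handle small-$r$ corner cases cleanly.
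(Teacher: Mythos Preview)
Your proof is correct and proceeds along essentially the same lines as the paper for part (a): both verify integrality, compute $d\cdot t=q(2^n-1)+1$ with $q=\frac{d\de(r)-1}{2^r-1}$, and bound $0<t<2^n-1$.

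For (b) and (c) your organization differs slightly from the paper's. The paper re-verifies the congruence for (b) \emph{independently}, multiplying the right-hand side of (\ref{eq:main2}) by $d$ and reducing it to $1$ modulo $2^n-1$ via the identity $\spdb(r)+\spdb(\ordb-r)=d+1$ of Corollary~\ref{cor:sum}; it then obtains (c) by combining (b) with Proposition~\ref{prop:inverse-r}. You instead reduce both (b) and (c) algebraically to the already-established formula (a), invoking Corollary~\ref{cor:sum} at the same crucial point in Step~5 to eliminate $\de(\ordb-r)$. The ingredients are identical; your route is arguably tidier because everything funnels through a single verification, while the paper's route gives an independent cross-check of (b). Your Step~3 is a bit terse on the positivity of $q$ and on the $r=1$ corner case, but these are cosmetic issues shared with the paper's own treatment.
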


\begin{proof}
{\bf (a)} Set 
$$
t:=\de(r) \cdot 2^{n-r} + \left( \frac{d \cdot \de(r)-1}{2^r-1} - 1 \right) \cdot\frac{2^{n-r}-1}{d}.
$$
Clearly, $t$ is a positive integer, since $2^r-1$ divides $d\cdot \de(r)-1$ and $d$ divides $2^{n-r}-1$.
Next we show that $t < 2^n-1$. Note that $\de(r) < 2^r-2$, since otherwise $\de(r) = 2^r-2 =d$, which
contradicts the assumption that $d$ is odd.
Hence we have
\begin{eqnarray*}
\de(r) &<& 2^r-2  ~~\Rightarrow \\ d \cdot \de(r) &<& d \cdot (2^r-2)  ~~\Rightarrow \\
d \cdot \de(r) -1 & <& d \cdot (2^r-1) - (d+1) ~~ \Rightarrow \\ 
 \frac{(d  \cdot \de(r) -1)(2^n-1)}{2^r-1} & <& (2^n-1) \cdot d - \frac{(d+1)(2^n-1)}{2^r-1}  ~~ \Rightarrow\\
 \frac{(d  \cdot \de(r) -1)(2^n-1)}{2^r-1} +1 & <& (2^n-1) \cdot d - (d+1) +1  ~~ \Rightarrow \\
 \frac{(d  \cdot \de(r) -1)(2^n-1)}{2^r-1} +1 & <& (2^n-1) \cdot d  ~~ \Rightarrow\\
(2^n-2^r) \cdot \frac{d  \cdot \de(r) -1}{2^r-1} + (d  \cdot \de(r) -1) +1 &  <& (2^n-1) \cdot d.  \\
\end{eqnarray*}
It remains to observe that the left hand side of the last inequality is
\begin{eqnarray*}
(2^n-2^r) \cdot \frac{d  \cdot \de(r) -1}{2^r-1} + (d  \cdot \de(r) -1) +1 =\\ 
2^{n-r} \left(d  \cdot \de(r) -1\right) +  \left(2^{n-r}-1\right)\frac{d  \cdot \de(r) -1}{2^r-1} +1 = \\
2^{n-r} \cdot d \cdot \de(r) +  \left(2^{n-r}-1\right)\left(\frac{d  \cdot \de(r) -1}{2^r-1} -1\right) = t \cdot d,
\end{eqnarray*}
proving that indeed $t < 2^n-1$. 

To complete the proof we must show that $t$ is the inverse of $d$ modulo $2^n-1$. From the above observation,
we have
\begin{eqnarray*}
 t \cdot d  & = & (2^n-2^r) \cdot \frac{d  \cdot \de(r) -1}{2^r-1} + (d  \cdot \de(r) -1) +1 \\ 
 & = & (2^n-1) \cdot \frac{d  \cdot \de(r) -1}{2^r-1}  - (2^r-1)\cdot \frac{d  \cdot \de(r) -1}{2^r-1}  + (d  \cdot \de(r) -1) +1 \\
& \equiv & 1 \pmod{2^n-1}.
\end{eqnarray*}

\vspace*{0.4cm}{\bf (b)}
Set
$$\spdb(n) = \frac{d\cdot \de(n) -1}{2^n - 1}.$$
Then Corollary \ref{cor:sum} shows that
\begin{equation}\label{eq:sum}
\spdb(r) + \spdb(\ordb -r) = d+1.
\end{equation}
Multiplying (\ref{eq:main2}) by $d$, we have
\begin{align*}
d \cdot  &\left( \de(r) \sum_{i=0}^{m} 2^{\ordb \cdot i} + \left(2^{\ordb -r} - 1 - \de(\ordb -r)\right) 2^r\cdot \sum_{i=0}^{m -1} 2^{\ordb\cdot i} \right) \\
&= \left( \spdb(r)(2^r-1) + 1 \right)\sum_{i=0}^{m} 2^{\ordb\cdot i}\\
&\hspace{2cm} + 2^r\cdot d\left( 2^{\ordb-r}   - 1) - 2^r\left( \spdb(\ordb -r)(2^{\ordb -r}-1) + 1\right) \right) \sum_{i=0}^{m-1} 2^{\ordb\cdot i} \\
&=  \spdb(r)(2^r-1) \sum_{i=0}^{m} 2^{\ordb\cdot i}  + \sum_{i=0}^{m} 2^{\ordb\cdot i}\\
&\hspace{2cm} + \big( 2^r(2^{\ordb-r}-1) \left( d - \spdb(\ordb -r)\right)  - 2^r \big) \sum_{i=0}^{m-1} 2^{\ordb\cdot i} \\
&=  \spdb(r)\left( (2^n-1) +( 2^r - 2^{\ordb}) \cdot\sum_{i=0}^{m-1} 2^{\ordb \cdot i}\right)  + 1 + 2^{\ordb}\sum_{i=0}^{m-1} 2^{\ordb \cdot i} \\
&\hspace{2cm} + \left( 2^r(2^{\ordb-r}-1) \left( d - \spdb(\ordb -r)\right)  - 2^r \right) \sum_{i=0}^{m-1} 2^{\ordb\cdot i} 
\end{align*}
\begin{align*}
&=  \spdb(r)(2^n-1) + 1 + 2^{\ordb}\sum_{i=0}^{m-1} 2^{\ordb\cdot i} \\
&\hspace{2cm}+ \left( 2^r(2^{\ordb-r}-1) \left( d - \spdb(\ordb -r) - \spdb(r) \right)  - 2^r \right) \sum_{i=0}^{m-1} 2^{\ordb\cdot i} \\
&= 1 +  \spdb(r)(2^n-1) + 2^r(2^{\ordb-r}-1) \left( d - \spdb(\ordb -r) - \spdb(r) + 1\right) \sum_{i=0}^{m-1} 2^{\ordb\cdot i}\\
&\equiv 1 \pmod{2^n-1},
\end{align*}
where we apply (\ref{eq:sum}) to get the congruence modulo $2^n-1$.

\vspace*{0.4cm}{\bf (c)} This identity follows from (\ref{inverse-r}) in Proposition \ref{prop:inverse-r} and (\ref{eq:main2}) of part (b) of this theorem.
\end{proof}

Formulas (\ref{eq:main1}) and (\ref{eq:main2}) of Theorem \ref{main} imply that the binary representation of inverses modulo $2^n-1$ have a nice combinatorial structure:

\begin{cor}\label{main-comb}
Let $n>1$ with $\gcd(d, 2^n-1) =1$ and
$1 \leq r \leq \ordb -1$ such that $n \equiv r \pmod \ordb$. Set 
\begin{itemize}
\item[-] $\boldsymbol{t_n}$ be
the binary sequence of length $n$ representing $\de(n)$;
\item[-] $\boldsymbol{u_{\ordb}}$ be
the binary sequence of length $\ordb$  representing $\left( \frac{d \cdot \de(r)-1}{2^r-1} - 1 \right) \cdot\frac{2^{^{\ordb}}-1}{d} $;
\item[-] $\boldsymbol{a_r}$ be
the binary sequence of length $r$  representing $ \de (r)$;
\item[-] $\boldsymbol{\bar{b}_{\ordb -r}}$ be
the complementary sequence of the binary sequence of length $\ordb -r$ representing the  $\de (\ordb -r)$. 
\end{itemize}
Then $\boldsymbol{t_n}$ is obtained by concatenating sequences  $\boldsymbol{a_r}$,  $\boldsymbol{\bar{b}_{\ordb -r}}$ and $\boldsymbol{u_{\ordb}}$ as
follows:
$$
\boldsymbol{t_n} =  \boldsymbol{a_r}\,|\, \boldsymbol{u_{\ordb}} \,|\, \boldsymbol{u_{\ordb}}\,| \ldots |\,\boldsymbol{u_{\ordb}} 
= \boldsymbol{a_r}\,|\, \boldsymbol{\bar{b}_{\ordb -r}}\,| \, \boldsymbol{a_r}\,| \ldots |\,  \boldsymbol{\bar{b}_{\ordb -r}} \,|\, \boldsymbol{a_r}.
$$
 
\end{cor}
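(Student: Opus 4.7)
The plan is to recognize that the two claimed concatenation formulas for $\boldsymbol{t_n}$ are direct readings of identities~(\ref{eq:main1}) and~(\ref{eq:main2}) of Theorem~\ref{main}, once each constituent sequence is interpreted as a non-negative integer strictly less than $2$ raised to its length. I would first dispatch the degenerate case $m:=(n-r)/\ordb=0$, where $n=r$ and both formulas collapse to $\boldsymbol{t_r}=\boldsymbol{a_r}$, which is true by definition of $\boldsymbol{a_r}$. From here on I assume $m\ge 1$.

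For the first equality, let $U:=\bigl(\tfrac{d\,\de(r)-1}{2^r-1}-1\bigr)\cdot\tfrac{2^{\ordb}-1}{d}$, which by definition is the integer whose length-$\ordb$ binary representation is $\boldsymbol{u_{\ordb}}$. I would verify the bounds $0\le U<2^{\ordb}$: the lower bound follows because $d\cdot\de(r)\equiv 1\pmod{2^r-1}$ combined with $d\ge 3$ forces $\tfrac{d\,\de(r)-1}{2^r-1}\ge 1$; the upper bound follows from $\de(r)\le 2^r-2$ (which uses that $d$ is odd so $\de(r)\ne 2^r-1$), giving $\tfrac{d\,\de(r)-1}{2^r-1}-1<d-1$ and hence $U<2^{\ordb}-1$. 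Using the geometric identity $\tfrac{2^{n-r}-1}{2^{\ordb}-1}=\sum_{i=0}^{m-1}2^{i\ordb}$, formula~(\ref{eq:main1}) rewrites as
\begin{equation*}
\de(n)=\de(r)\cdot 2^{m\ordb}+U\cdot\sum_{i=0}^{m-1}2^{i\ordb},
\end{equation*}
and, because $\de(r)<2^r$ and $U<2^{\ordb}$, the right-hand side is precisely the length-$n$ binary expansion $\boldsymbol{a_r}\,|\,\boldsymbol{u_{\ordb}}\,|\cdots|\,\boldsymbol{u_{\ordb}}$ with $m$ copies of $\boldsymbol{u_{\ordb}}$.

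For the second equality I would invoke~(\ref{eq:main2}) directly. The sequence $\boldsymbol{\bar{b}_{\ordb-r}}$ represents the integer $2^{\ordb-r}-1-\de(\ordb-r)$, which lies in $[0,\,2^{\ordb-r})$ because $1\le\de(\ordb-r)\le 2^{\ordb-r}-2$. Hence the concatenation $\boldsymbol{a_r}\,|\,\boldsymbol{\bar{b}_{\ordb-r}}\,|\,\boldsymbol{a_r}\,|\cdots|\,\boldsymbol{\bar{b}_{\ordb-r}}\,|\,\boldsymbol{a_r}$ of total length $(m+1)r+m(\ordb-r)=n$ represents
\begin{equation*}
\de(r)\sum_{i=0}^{m}2^{i\ordb}+\bigl(2^{\ordb-r}-1-\de(\ordb-r)\bigr)\cdot 2^r\sum_{i=0}^{m-1}2^{i\ordb},
\end{equation*}
which is exactly~(\ref{eq:main2}).

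The main obstacle is pure bookkeeping: verifying the three length-bounds $0\le\de(r)<2^r$, $0\le U<2^{\ordb}$ and $0\le 2^{\ordb-r}-1-\de(\ordb-r)<2^{\ordb-r}$, so that the concatenations faithfully yield the binary expansion of $\de(n)$ without carry or leading-bit collision. As a coherence check reconciling the two forms, applying~(\ref{sum-r}) of Corollary~\ref{cor:sum} to the definition of $U$ gives $U=\bigl(2^{\ordb-r}-1-\de(\ordb-r)\bigr)\cdot 2^r+\de(r)$, which means $\boldsymbol{u_{\ordb}}=\boldsymbol{\bar{b}_{\ordb-r}}\,|\,\boldsymbol{a_r}$ and makes the equality of the two concatenation forms immediate.
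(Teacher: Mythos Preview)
Your proposal is correct and follows exactly the approach the paper intends: the paper states this corollary as an immediate consequence of formulas~(\ref{eq:main1}) and~(\ref{eq:main2}) of Theorem~\ref{main} without supplying details, and you have filled in precisely the bookkeeping (the three size bounds and the geometric-series rewriting) needed to read those identities as concatenations. Your final coherence check via Corollary~\ref{cor:sum}, showing $\boldsymbol{u_{\ordb}}=\boldsymbol{\bar{b}_{\ordb-r}}\,|\,\boldsymbol{a_r}$, is a nice touch that the paper leaves implicit but uses later (e.g.\ in the proof of Lemma~\ref{lem:oeven}).

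One tiny remark: the bound $\de(r)\le 2^r-2$ already follows from the definition $0<\de(r)<2^r-1$, so the parenthetical about $d$ being odd is not needed here; the paper invokes oddness of $d$ in the proof of Theorem~\ref{main}(a) to get the \emph{strict} inequality $\de(r)<2^r-2$, but for your purposes $\le$ suffices.
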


\begin{exmp}
Let $d =7$. Then $\ordb[7] = 3$ and for every $n$ not divisible by 3, we have $r =  1,\, 2 \equiv n \pmod{3}$. Moreover, $\de[7](1) = \de[7](2) = 1$, thus from Theorem \ref{main}, we deduce
$$\de[7](n) = 2^{n-r} + \left( \frac{6}{2^r-1} - 1 \right) \cdot \frac{2^{n-r}-1}{7}.$$
Suppose that $r=1$, then the binary representation of $\de[7](n)$, say $t$, is as follow:
$$ t = 1 \, | \, u \, | \, u \, | \, \dots \, | \, u  = 1\, | \, 10 \, | \, 1 \, | \, 10\, | \, 1 \, | \dots \, | \, 10 \, | \, 1,$$
where $u$ is a binary sequence of length 3 representing the integer $\left( \frac{d \cdot \de(r)-1}{2^r-1} - 1 \right) \cdot\frac{2^{^{\ordb}}-1}{d} = 5$.
Suppose that $r=2$, then the binary representation of $\de[7](n)$, say $t'$, is as follow:
$$ t' = 01 \, | \, u' \, | \, u' \, | \, \dots \, | \, u'  = 01 \, | \, 0 \, | \, 01 \, | \, 0 \, | \, \dots \, | \, 01,$$
where $u'$ is a binary sequence of length 3 representing the integer $\left( \frac{d \cdot \de(r)-1}{2^r-1} - 1 \right) \cdot\frac{2^{^{\ordb}}-1}{d} = 1$.

\end{exmp}

\begin{lemma}\label{lem:oeven}
Let $\ordb$ be even and let $d$ divide $2^{\ordb / 2} + 1$. 
Further, suppose $n \geq 1$ is such that $d$ and $2^n-1$ are coprime, and
let $1 \leq r \leq \ordb -1$ be the least positive residue of $n  \pmod \ordb$.
Then the following properties hold:
\begin{description}
\item[(a)]
$$wt(\de(n) ) = wt(\de(r)) + \frac{n-r}{2}.$$
\item[(b)]
\begin{equation}
wt(\de(\ordb-r)) = wt(\de(r)) + \frac{\ordb}{2}-r.
\end{equation}\label{eq:weights}
\end{description}
\end{lemma}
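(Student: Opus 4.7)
The plan is to prove part (b) first and then derive part (a) as a direct consequence of the block-periodic structure of $\de(n)$ described in Corollary \ref{main-comb}. The core claim is that, under the hypothesis $d \mid 2^{\ordb/2} + 1$, each length-$\ordb$ period block $\boldsymbol{u_{\ordb}}$ appearing in the binary expansion of $\de(n)$ has binary weight exactly $\ordb/2$.

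Concretely, by Corollary \ref{main-comb} the integer $V$ represented by $\boldsymbol{u_{\ordb}}$ equals $(\spdb(r) - 1)(2^{\ordb} - 1)/d$, where I write $\spdb(r) = (d\,\de(r) - 1)/(2^r - 1)$ as in the proof of Theorem \ref{main}. The hypothesis $d \mid 2^{\ordb/2} + 1$ allows me to set $K = (2^{\ordb/2} + 1)/d \in \mathbb{Z}_{\geq 1}$ and factor $(2^{\ordb} - 1)/d = K\,(2^{\ordb/2} - 1)$, which writes $V = (\spdb(r) - 1) \cdot K \cdot (2^{\ordb/2} - 1)$: precisely the form treated by Lemma \ref{lem:weight}. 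Provided the coefficient $(\spdb(r) - 1) K$ lies strictly between $0$ and $2^{\ordb/2}$, Lemma \ref{lem:weight} gives $wt(V) = \ordb/2$. On the other hand, the concatenation $\boldsymbol{u_{\ordb}} = \boldsymbol{\bar{b}_{\ordb - r}} \,|\, \boldsymbol{a_r}$ directly yields $wt(V) = wt(\de(r)) + (\ordb - r) - wt(\de(\ordb - r))$; equating the two expressions for $wt(V)$ rearranges to part (b). Part (a) then follows by reading weights off $\boldsymbol{t_n} = \boldsymbol{a_r} \,|\, \boldsymbol{u_{\ordb}} \,|\, \cdots \,|\, \boldsymbol{u_{\ordb}}$ with $m = (n - r)/\ordb$ copies of $\boldsymbol{u_{\ordb}}$, giving $wt(\de(n)) = wt(\de(r)) + m \cdot \ordb/2 = wt(\de(r)) + (n - r)/2$.

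The main obstacle is verifying the two-sided inequality $0 < (\spdb(r) - 1) K < 2^{\ordb/2}$ needed to invoke Lemma \ref{lem:weight}. For the lower bound, I will argue $\spdb(r) \geq 2$: the alternative $\spdb(r) = 1$ would force $d\,\de(r) = 2^r$, impossible for $d$ odd and at least $3$. For the upper bound, the constraint $\de(r) \leq 2^r - 2$ yields $\spdb(r) \leq d - 1$, whence $(\spdb(r) - 1) K \leq (d - 2) K = 2^{\ordb/2} + 1 - 2K \leq 2^{\ordb/2} - 1$ using $K \geq 1$. The hypothesis $d \mid 2^{\ordb/2} + 1$ is indispensable throughout: it both makes $K$ an integer and supplies the clean $(2^{\ordb/2} - 1)$-factorization that brings $V$ into the range of Lemma \ref{lem:weight}.
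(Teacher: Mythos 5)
Your proof is correct and takes essentially the same route as the paper's: both rest on the factorization $\frac{2^{\ordb}-1}{d} = \frac{2^{\ordb/2}+1}{d}\cdot(2^{\ordb/2}-1)$ so that Lemma \ref{lem:weight} yields $wt(\boldsymbol{u_{\ordb}}) = \ordb/2$, combined with the concatenation structure $\boldsymbol{t_n} = \boldsymbol{a_r}\,|\,\boldsymbol{u_{\ordb}}\,|\cdots|\,\boldsymbol{u_{\ordb}}$ and $\boldsymbol{u_{\ordb}} = \boldsymbol{\bar{b}_{\ordb-r}}\,|\,\boldsymbol{a_r}$ from Corollary \ref{main-comb}, the order in which (a) and (b) are derived being immaterial. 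Your explicit check that $0 < (\spdb(r)-1)\,\frac{2^{\ordb/2}+1}{d} < 2^{\ordb/2}$ is a small bonus in rigor, since the paper invokes Lemma \ref{lem:weight} without verifying that its hypothesis on the multiplier is met.
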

\begin{proof}
By Corollary \ref{main-comb} the binary weight of $\de(n)$ is
$$
wt(\de(n) ) = wt(\de(r)) + \frac{n-r}{\ordb}wt({\boldsymbol{u_{\ordb}}}),
$$
where  $\boldsymbol{u_{\ordb}}$ is
the binary sequence of length $\ordb$  representing the integer
$$
\left( \frac{d \cdot \de(r)-1}{2^r-1} - 1 \right) \cdot\frac{2^{^{\ordb}}-1}{d} = \left( \frac{d \cdot \de(r)-1}{2^r-1} - 1 \right) \cdot\frac{2^{^{\ordb /2}}+ 1}{d} \cdot (2^{^{{\ordb}/2}}-1).$$
By Lemma \ref{lem:weight}, the weight of ${\boldsymbol{u_{\ordb}}}$ is $\ordb /2$, which completes the proof of (a). The statement of (b) follows from the fact
that 
$$
{\boldsymbol{u_{\ordb}}} =  \boldsymbol{\bar{b}_{\ordb -r}} \,|\, \boldsymbol{a_r}\,,
$$
and therefore
$$
\frac{\ordb}{2} = wt({\boldsymbol{u_{\ordb}}}) = \ordb - r - wt(\de(\ordb -r)) + wt(\de(r)).
$$
\end{proof}

\begin{remark}
Let \texttt{Inverse} be an algorithm for inversion modulo $2^n-1$. 
Theorem \ref{main} and discussions of this subsection  show that for several classes of integers
finding their inverses modulo $2^n-1$ can be reduced to computations modulo $2^{n'}-1$ with $n'$
much smaller than $n$. We summarize this observation in the following algorithm.
\end{remark}

\vspace*{0.5cm}

\begin{algorithm}[H]
\KwData{positive integers $d$ and $n$ such that $\gcd(d,2^n-1)=1$.}
\KwResult{$\de(n)$, the inverse of $d$ modulo $2^n-1$.}
\If{$n=1$ or $d=1$}{
\Return 1}
\If{$d$ is even}{
\Return $\displaystyle 2^{n-1} \de[\frac{d}{2}](n) \pmod{2^n-1}$
}
$\ordb \gets$ the order of 2 modulo $d$\;
$r \gets n \pmod{\ordb}$\;\label{ordb}
\eIf{$r \neq n$}{
$A \gets \de(r)$\;
\Return $\displaystyle A\cdot 2^{n-r} + \left( \frac{d\cdot A -1}{2^r-1} -1\right) \cdot \frac{2^{n-r}-1}{d}$
\;\label{if:n} 
}{
$d' \gets d \pmod{2^n-1}$\;\label{mod:d}
\eIf{$d' \neq d$}{
\Return $\de[d'](n)$\label{if:d}
}{
\eIf{$n> \ordb/2$}{
\Return $\displaystyle \frac{\left( d + 1 - \frac{\de(\ordb-n)\cdot d -1}{2^{\ordb-n}-1} \right) (2^n-1) + 1}{d} = \de(n)$ \;\label{if:nn}
}{
Compute $\de(n)$ using \texttt{Inverse}\;\label{pb}
}
}
}\caption{Recursive inversion}
\label{algo}
\end{algorithm}

Algorithm \ref{algo} reduces the computation of the
inverse of $d$ modulo $2^n-1$ to one of modulo $2^r-1$ with $1 \leq r \leq \ordb$. In particular, this algorithm 
is effective  for  integers $d$ with $\ordb$  much smaller than $n$ or for integers $d$ with known small factors.
Furthermore, this algorithm performs good for several special integers $d$, like Gold's and Kasami's exponents considered in the next subsections.

The next two examples compute inverses using Algorithm \ref{algo}. 

\begin{exmp}
Let $n=97$ and $d=2^{11}+1= 2049$. We compute the inverse of $2049$ modulo $2^{97}-1$:
\begin{description}
\item [1:] $\ordb[2049] \leftarrow 22$
\item [2:] $r \leftarrow 9 \equiv 97 \pmod{22}$
\item [3:] $9 \neq 97$ then return $\displaystyle \de[2049](9) \cdot 2^{88} + \left( \frac{2049 \cdot \de[2049](9) -1}{511} -1 \right) \cdot \frac{2^{88} -1}{2049}$
\begin{description}
\item [3.1:] now $n=9$ and $d=2049$
\item [3.2:] $\ordb[2049] \leftarrow 22$
\item [3.3:] $r \leftarrow 9 \pmod{22}$ then 
\item [3.4:] $d' \leftarrow 5 \equiv 2049 \pmod{2^9-1}$ 
\item [3.5:] return $\de[5](9)$
\begin{description}
\item [3.5.1:] now $n=9$ and $d=5$
\item [3.5.2:] $\ordb[5] \leftarrow 4$
\item [3.5.3:] $r \leftarrow 1 \equiv 9 \pmod{4}$
\item [3.5.4:] $1 \neq 9$ then return $\displaystyle 2^8 + 3\cdot \frac{2^8 - 1}{5}$

\end{description}
\end{description}
\item [4:] $\displaystyle \de[2049](97) = \left( 2^8 + 3\cdot \frac{2^8 - 1}{5}  \right)\cdot 2^{88} + \left( \frac{2049 \cdot \left(2^8 + 3\cdot \frac{2^8 - 1}{5} \right) -1}{511} - 1  \right) \cdot \frac{2^{88} -1}{2049}$

\end{description}
Note that in this example, we do not need to call the algorithm \texttt{Inverse}.
\end{exmp}

\begin{exmp}
Let $n=101$ and $d=13$. We compute the inverse of $13$ modulo $2^{101}-1$:
\begin{description}
\item [1:] $\ordb[13] \leftarrow 12$
\item [2:] $r \leftarrow 5 \equiv 101 \pmod{12}$
\item [3:] Using \texttt{Inverse} we compute that the inverse of $13$ modulo $2^5-1$ is $12$
\item [4:] $\displaystyle \de[13](101) = 12\cdot 2^{96} + \left( \frac{155}{31} -1 \right)\cdot \frac{2^{96}-1}{13}$
\end{description}
Note that the computation of the inverse of 13 modulo $2^{101}-1$ was reduced to the one modulo $2^5-1$.
\end{exmp}

\subsection{Gold's exponent}

An integer $1 \leq d \leq 2^n-2$ is called Gold's exponent if $d=2^k+1$ and $\gcd(n,k) =1$.
The assumption $\gcd(n,k)=1$ is  necessary and sufficient for the mapping $x \mapsto x^d$ to be APN on $\Fn$.
In this section we use the term Gold's exponent to refer to the integers $d=2^k+1$ with $n/\gcd(n,k)$ odd. The assumption
$n/\gcd(n,k)$ odd ensures that $2^k+1$ is invertible modulo $2^n-1$, cf.  \cite[Lemma 11.1]{Mc}:

\begin{lemma}\label{lem:McEliece}
Let $n$ and $k$ be positive integers. Then  $\gcd(2^k + 1, 2^n - 1) = 1$ if and only if $n/\gcd(n,k)$ is odd.
\end{lemma}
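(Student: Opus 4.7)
The plan is to prove both directions by exploiting the elementary identity $\gcd(2^a-1, 2^b-1) = 2^{\gcd(a,b)}-1$ together with the factorization $2^{2k}-1 = (2^k-1)(2^k+1)$. Throughout, set $g = \gcd(n,k)$, and write $n = g n'$, $k = g k'$ with $\gcd(n', k') = 1$.

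For the forward direction, assume $n'$ is odd; I want to show $\gcd(2^k+1, 2^n-1) = 1$. Let $d$ denote this gcd. Since $d \mid 2^k+1$, we have $d \mid (2^k-1)(2^k+1) = 2^{2k}-1$, and combined with $d \mid 2^n-1$ this gives $d \mid \gcd(2^{2k}-1, 2^n-1) = 2^{\gcd(2k, n)}-1$. The parity hypothesis enters here: since $n' $ is odd and coprime to $k'$, one checks $\gcd(2k, n) = g \cdot \gcd(2k', n') = g$, so $d \mid 2^g - 1$. On the other hand $g \mid k$ forces $2^g-1 \mid 2^k-1$, so $d \mid \gcd(2^k+1, 2^k-1)$, which equals $\gcd(2, 2^k-1) = 1$. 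Hence $d = 1$.

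For the reverse direction, assume $n'$ is even, so $k'$ must be odd (since $\gcd(n',k')=1$). I will exhibit $2^g+1$ as a nontrivial common divisor. On the one hand $2g \mid n$ (because $n' $ is even), so $2^g+1 \mid 2^{2g}-1 \mid 2^n-1$. On the other hand, since $k'$ is odd, the factorization $x^{k'}+1 = (x+1)(x^{k'-1} - x^{k'-2} + \cdots + 1)$ applied to $x = 2^g$ yields $2^g + 1 \mid (2^g)^{k'} + 1 = 2^k+1$. Since $g \geq 1$ gives $2^g+1 \geq 3$, this forces $\gcd(2^k+1, 2^n-1) > 1$.

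No step is really an obstacle; the only subtlety is the clean computation $\gcd(2k, n) = g$ when $n/g$ is odd, which relies on $\gcd(k', n') = 1$ together with the oddness of $n'$ to eliminate the factor of $2$ from $2k$. Once this is in place, the rest is a mechanical application of standard gcd identities and the factorization of $x^{k'}+1$ for odd $k'$.
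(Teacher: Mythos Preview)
Your proof is correct. Both directions are handled cleanly: the forward direction reduces everything to $\gcd(2^k+1,2^k-1)=1$ via the identity $\gcd(2^a-1,2^b-1)=2^{\gcd(a,b)}-1$ and the key computation $\gcd(2k,n)=g$ (which does rely on $n'$ odd and $\gcd(n',k')=1$ exactly as you note), and the reverse direction exhibits the explicit common factor $2^g+1$ using the factorization of $x^{k'}+1$ for odd $k'$.

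As for comparison: the paper does not actually prove this lemma. It is stated with a citation to McEliece's textbook \cite{Mc} and used as a black box in the subsequent arguments on Gold and Kasami exponents. Your argument supplies a complete self-contained proof where the paper gives none.
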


The inverses of APN Gold's exponents were considered in \cite[Proposition 5]{N93}:

\begin{prop}\label{prop:nyberg}
Let $n$ be odd, and $\gcd(n,k) = 1$. Then
$$ \dego(n) \equiv \frac{2^{k(n+1)} - 1 }{2^{2k}-1} \equiv \sum_{j=0}^{\frac{n-1}{2} } 2^{2jk } \pmod{2^n - 1} $$
and
$\displaystyle{ wt(\dego(n)) = \frac{n+1}{2}}.$
\end{prop}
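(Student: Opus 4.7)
The plan is to directly verify that $t := \sum_{j=0}^{(n-1)/2} 2^{2jk}$ is the inverse of $2^k+1$ modulo $2^n-1$, and then to compute its binary weight by analysing the exponents modulo $n$. First, I would observe that the two stated expressions agree as integers, since the second is a geometric series with ratio $2^{2k}$:
\[
\sum_{j=0}^{(n-1)/2} 2^{2jk} \;=\; \frac{2^{2k\cdot(n+1)/2}-1}{2^{2k}-1} \;=\; \frac{2^{k(n+1)}-1}{2^{2k}-1}.
\]
Note that the denominator $2^{2k}-1$ divides the numerator because $(n+1)/2$ is an integer, so $t$ is indeed an integer.

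Next, to show $(2^k+1)\cdot t \equiv 1 \pmod{2^n-1}$, I would telescope the product:
\[
(2^k+1)\cdot t \;=\; \sum_{j=0}^{(n-1)/2} 2^{(2j+1)k} + \sum_{j=0}^{(n-1)/2} 2^{2jk} \;=\; \sum_{i=0}^{n} 2^{ik}.
\]
Separating off the top term, $2^{nk} = (2^n)^k \equiv 1 \pmod{2^n-1}$. For the remaining sum $\sum_{i=0}^{n-1} 2^{ik}$, since $\gcd(n,k)=1$, the map $i \mapsto ik \bmod n$ is a bijection of $\{0,1,\dots,n-1\}$, so modulo $2^n-1$ this sum equals $\sum_{j=0}^{n-1} 2^j = 2^n-1 \equiv 0$. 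Combining yields $(2^k+1)\cdot t \equiv 1 \pmod{2^n-1}$.

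For the weight claim, I would show that the $(n+1)/2$ exponents $\{2jk \bmod n : 0 \leq j \leq (n-1)/2\}$ are pairwise distinct, so that after reducing $t$ modulo $2^n-1$ each exponent contributes a single $1$-bit with no cancellations. Here the crucial ingredient is that $n$ is odd together with $\gcd(n,k)=1$, giving $\gcd(n,2k)=1$. Hence if $2j_1 k \equiv 2j_2 k \pmod n$ with $0 \leq j_1, j_2 \leq (n-1)/2$, then $j_1 \equiv j_2 \pmod n$, and the bound $|j_1-j_2|<n$ forces $j_1=j_2$. The least positive residue of $t$ therefore has exactly $(n+1)/2$ ones in its binary expansion, i.e. $\mathrm{wt}(\dego(n)) = (n+1)/2$.

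The only potentially subtle step is the congruence $\sum_{i=0}^{n-1} 2^{ik} \equiv 0 \pmod{2^n-1}$; the rest is bookkeeping. Once one notices that $i \mapsto ik \bmod n$ permutes residues (this is where $\gcd(n,k)=1$ is used essentially, not just for invertibility of $2^k+1$ via Lemma \ref{lem:McEliece}), the proof reduces to the elementary identity $\sum_{j=0}^{n-1} 2^j = 2^n-1$. No case analysis on $n \bmod 8$ or similar is required, in contrast to the Niho and Welch cases.
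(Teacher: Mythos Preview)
Your argument is correct and complete. Note, however, that the paper does not actually supply a proof of this proposition: it is quoted from Nyberg's work \cite[Proposition~5]{N93} and immediately followed only by the remark that the integer $\frac{2^{k(n+1)}-1}{2^{2k}-1}$ coincides with the least positive residue $\dego(n)$ only when $k=1$. So there is nothing in the paper to compare against line by line.

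On its own merits your proof is clean. The telescoping identity $(2^k+1)\sum_{j=0}^{(n-1)/2}2^{2jk}=\sum_{i=0}^{n}2^{ik}$ is the natural way to see the inverse, and your use of the bijection $i\mapsto ik\bmod n$ (from $\gcd(n,k)=1$) to collapse $\sum_{i=0}^{n-1}2^{ik}$ to $2^n-1$ is exactly the right mechanism. For the weight, your key observation that $\gcd(n,2k)=1$ (because $n$ is odd) forces the residues $2jk\bmod n$, $0\le j\le(n-1)/2$, to be pairwise distinct is correct; since these $(n+1)/2$ distinct exponents all lie in $\{0,\dots,n-1\}$ and $(n+1)/2<n$ for $n\ge 3$, the reduced sum is strictly between $0$ and $2^n-1$ and hence equals $\dego(n)$, giving the stated weight. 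One small addition you could make explicit is this last inequality, which confirms that you have indeed landed on the least positive residue and not merely a congruent integer.
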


Note that the integer $\frac{2^{k(n+1)} - 1 }{2^{2k}-1}$ is equal to the least positive residue of the inverse
of $2^k+1$ modulo $2^n-1$ if and only if $k=1$. For $k=1$, the statement of Proposition \ref{prop:nyberg} reduces to
$\normalfont\texttt{Inv}_{3}(n) = \frac{2^{n+1}-1}{3}$ for all $n$ odd.

\begin{lemma}\label{order:gold}
Let $k \geq 1$ be an integer. The order of $2$ modulo $2^k+1$ is $\theta_{2^k+1} =2k$. 
\end{lemma}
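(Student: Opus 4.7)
The plan is to first establish that $2k$ is an upper bound on the order, then argue minimality by bounding $2^o - 1$ from above.

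The starting observation is that $2^k \equiv -1 \pmod{2^k+1}$ by construction, so squaring gives $2^{2k} \equiv 1 \pmod{2^k+1}$. This immediately shows that $\theta_{2^k+1}$ divides $2k$.

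Next, I would exploit the structure of the divisors of $2k$. If $o$ is the order of $2$ modulo $2^k+1$, then $o \mid 2k$. Any divisor $d$ of $2k$ satisfies $2k/d \geq 1$; if in addition $d > k$ then $2k/d < 2$, forcing $d = 2k$. Consequently, either $o = 2k$ (which is what we want) or $o \leq k$.

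The remaining step, and the crux of the argument, is to rule out $o \leq k$. This is just a size comparison: if $1 \leq o \leq k$, then
\begin{equation*}
0 < 2^o - 1 \leq 2^k - 1 < 2^k + 1,
\end{equation*}
so $2^k+1$ cannot divide the nonzero integer $2^o - 1$, contradicting $2^o \equiv 1 \pmod{2^k+1}$. Hence $o = 2k$. I do not expect a serious obstacle here; the only thing to be careful about is not confusing the order $\theta_{2^k+1}$ with $\theta_d$ for other divisors $d$ of $2^k+1$, but the lemma as stated concerns only the modulus $2^k+1$ itself, so the short two-step argument above suffices.
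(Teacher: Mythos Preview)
Your proof is correct and follows essentially the same route as the paper: both rely on the size bound $2^o - 1 < 2^k + 1$ for $o \leq k$ to rule out a small order. The paper phrases this as ``$k$ is the smallest positive integer with $2^k \equiv -1 \pmod{2^k+1}$, hence $\theta_{2^k+1} = 2k$,'' while you spell out the divisor argument on $2k$ explicitly; the underlying idea is identical.
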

\begin{proof}
Clearly, $k$ is the smallest positive integer satisfying
$$
 2^{k} = -1 \pmod {2^k +1},
$$
implying that $\theta_{2^k+1} = 2k$.
\end{proof}

Lemma \ref{order:gold} and Theorem \ref{main} show that to invert a fixed Gold's exponent $2^k+1$ modulo all $2^n-1~~( n \geq 1)$ it is enough
to obtain the inverses $\dego(r)$ for $1 \leq r < 2k$. In some of the arguments of this section it will be enough to consider only $1 \leq r \leq k$, because of the 
following easy observation:

\begin{claim}\label{claim:gold}
Let $0\leq r <k$. Then $2^k+1 \equiv 2^k\cdot(2^r+1) \pmod {2^{k+r}-1}$. In particular, the least positive residues
of inverses of $2^r+1$ and $2^k+1$ modulo $2^{k+r}-1$ have the same binary weight.
\end{claim}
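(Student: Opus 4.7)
The plan is very short. For the congruence, I would expand $2^k\cdot(2^r+1) = 2^{k+r} + 2^k$ and reduce using $2^{k+r} \equiv 1 \pmod{2^{k+r}-1}$ to obtain $2^k+1$, settling the first assertion in one line.

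For the weight equality, the key observation is that $2^k$ is a unit modulo $2^{k+r}-1$ with inverse $2^r$ (since $2^{k+r}\equiv 1$). Hence the congruence just established implies that $2^k+1$ is invertible modulo $2^{k+r}-1$ if and only if $2^r+1$ is, and in that case
$$(2^k+1)^{-1} \equiv 2^r\cdot (2^r+1)^{-1} \pmod{2^{k+r}-1}.$$
I would then invoke the standard fact that for any integer $A$ with $0<A<2^{k+r}-1$, multiplication by $2^r$ modulo $2^{k+r}-1$ corresponds to a cyclic shift of the $(k+r)$-bit binary representation of $A$, hence preserves binary weight and keeps the value in the range $[1,\,2^{k+r}-2]$. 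Taking $A$ to be the least positive residue of $(2^r+1)^{-1}$ modulo $2^{k+r}-1$ then yields simultaneously that the shifted value is the least positive residue of $(2^k+1)^{-1}$ and that the two weights coincide.

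There is no serious obstacle; the only point requiring a moment of care is justifying the cyclic-shift observation explicitly, by writing $A=\sum_{i=0}^{k+r-1} a_i\,2^i$ and reducing each term $a_i\,2^{i+r}$ of $2^r A$ using $2^{k+r}\equiv 1 \pmod{2^{k+r}-1}$, which exhibits $2^r A \bmod (2^{k+r}-1)$ as $\sum_{i=0}^{k+r-1} a_i\,2^{(i+r)\bmod (k+r)}$.
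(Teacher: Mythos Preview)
Your proposal is correct. The paper states this claim without proof, calling it an ``easy observation,'' so there is no detailed argument to compare against; your one-line verification of the congruence and the cyclic-shift justification for the weight equality constitute exactly the natural argument the paper leaves implicit.
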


The following theorem summarizes the main results on the inverses of Gold exponents.

\begin{thm}\label{main:gold}
Let $n, k \geq 1$ with $\gcd(n, k) =s$ and $n/s$ odd. Set $r$ be the least positive residue of $n$ modulo $2k$.
 Then
\begin{description}
\item[(a)]
$$
\dego(n) = \dego(r) \cdot 2^{n-r} + \left( \frac{(2^k+1) \cdot \dego(r)-1}{2^r-1} - 1 \right) (2^k-1) \frac{2^{n-r}-1}{2^{2k}-1}.
$$
\item[(b)]
 Set 
\begin{itemize}
\item[-] $\boldsymbol{g_n}$ be
the binary sequence of length $n$ representing $\dego(n)$;
\item[-] $\boldsymbol{w_{k}}$ be
the binary sequence of length $k$  representing $\frac{(2^k+1) \cdot \dego(r)-1}{2^r-1} - 2  $;
\item[-] $\boldsymbol{a_r}$ be
the binary sequence of length $r$  representing $ \dego (r)$.
\end{itemize}
Then $\boldsymbol{g_n}$ is obtained by concatenating sequences  $\boldsymbol{a_r}$,  $\boldsymbol{w_k}$ and $\boldsymbol{\bar{w}_k}$ as
follows:
$$
\boldsymbol{g_n} 
= \boldsymbol{a_r}\,|\,\boldsymbol{w_k}\,|\,  \boldsymbol{\bar{w}_{k}} \,|\, \ldots |\, \boldsymbol{w_k} \,|\,  \boldsymbol{\bar{w}_{k}}. 
$$
In particular, $wt(\boldsymbol{g_n}) = wt(\boldsymbol{a_r}) + \frac{n-r}{2}$. 
\item[(c)]
The binary weight of $\dego(n)$ is $$ \frac{n-s}{2} +1 = \frac{n-s+2}{2}.$$
\end{description}
\end{thm}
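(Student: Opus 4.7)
The plan is to handle the three parts in sequence, since each depends on the prior ones.

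For \textbf{part (a)}, I would apply Theorem \ref{main}(a) with $d = 2^k+1$: by Lemma \ref{order:gold} we have $\theta_d = 2k$, so the $r$ in the present statement matches the $r$ of Theorem \ref{main}. The only manipulation needed is $\frac{2^{n-r}-1}{2^k+1} = (2^k-1)\cdot \frac{2^{n-r}-1}{2^{2k}-1}$, immediate from $2^{2k}-1 = (2^k-1)(2^k+1)$.

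For \textbf{part (b)}, I would invoke Corollary \ref{main-comb} with $\theta_d = 2k$. Using $\frac{2^{2k}-1}{2^k+1} = 2^k-1$, the length-$2k$ sequence $\boldsymbol{u_{2k}}$ of that corollary represents $(2^k-1)\,u$ where $u := \frac{(2^k+1)\dego(r)-1}{2^r-1}-1$. A short check confirms $1 \leq u < 2^k$: the upper bound follows from $\dego(r) < 2^r-1$, while $u=0$ would force $(2^k+1)\dego(r) = 2^r$, impossible since the left-hand side is odd. Lemma \ref{lem:weight} then splits $\boldsymbol{u_{2k}} = \boldsymbol{w_k} \mid \boldsymbol{\bar w_k}$ with $\boldsymbol{w_k}$ representing $u-1 = \frac{(2^k+1)\dego(r)-1}{2^r-1}-2$, and gives $wt(\boldsymbol{u_{2k}}) = k$. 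Substituting into the concatenation of Corollary \ref{main-comb} yields the claimed shape of $\boldsymbol{g_n}$ and the identity $wt(\boldsymbol{g_n}) = wt(\boldsymbol{a_r}) + (n-r)/2$.

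\textbf{Part (c)} is the deepest statement and I would prove it by strong induction on $(k, n)$, ordered lexicographically with $k$ as primary index. The case $n \geq 2k$ is dispatched at once by part (b): setting $r = n \bmod 2k$, one has $\gcd(r,k) = \gcd(n,k) = s$ and $r/s$ odd (inherited from $n/s$ odd, since $2k/s$ is even), so the hypothesis at $(k, r)$ closes the case. For $1 \leq n < 2k$ I would split into three subcases. If $n = k$, then $2^k+1 \equiv 2 \pmod{2^k-1}$ gives $\dego(k) = 2^{k-1}$ of weight $1$, matching $(k-k)/2 + 1$ with $s = k$. If $k < n < 2k$, Lemma \ref{lem:oeven}(b) yields $wt(\dego(n)) = wt(\dego(2k-n)) + n - k$ with $2k - n < k$ and $\gcd(2k-n, k) = s$, reducing to the remaining subcase. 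If $n < k$, the congruence $2^k \equiv 2^{k \bmod n} \pmod{2^n - 1}$ shows $\dego[2^k+1](n) = \dego[2^{k'}+1](n)$ with $k' = k \bmod n < k$; if $k' = 0$ then $n \mid k$, $s = n$, and $\dego(n) = 2^{n-1}$ of weight $1$; otherwise the inductive hypothesis at the strictly smaller Gold exponent $2^{k'}+1$ delivers the formula, using $\gcd(n, k') = s$ and $n/s$ odd. The induction bottoms out at $k = 1$, where Proposition \ref{prop:nyberg} gives $\dego[3](n) = (2^{n+1}-1)/3$ of weight $(n+1)/2 = (n-1)/2 + 1$ with $s = 1$.

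The main obstacle I foresee is the bookkeeping across these multi-step reductions: each one (reducing $n$ modulo $2k$, reflecting via $n \mapsto 2k - n$, or replacing the Gold exponent by $2^{k \bmod n}+1$) must be verified to preserve both $s = \gcd(n, k)$ and the parity of $n/s$, and to strictly decrease either $k$ or $n$ so the induction is well-founded. The gcd and parity checks are elementary, but they are exactly what makes the three recurrences cohere into the single weight formula $(n-s)/2 + 1$.
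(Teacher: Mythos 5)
Your proposal is correct. For (a) and (b) it matches the paper's proof: (a) specializes Theorem \ref{main}(a) using $\ordb[2^k+1]=2k$ (Lemma \ref{order:gold}), and (b) applies Lemma \ref{lem:weight} to $u=\frac{(2^k+1)\dego(r)-1}{2^r-1}-1$ within the block structure of Corollary \ref{main-comb}; your check that $u\geq 1$ is a detail the paper omits (only $u<2^k$ is asserted there), though the correct justification is that the odd integer $2^k+1>1$ cannot divide $2^r$ --- the product $(2^k+1)\dego(r)$ itself need not be odd, since $\dego(r)$ can be even. For (c) the two arguments share a skeleton --- reduce $n$ to $r=n\bmod 2k$ via (b), use $\de[2^s+1](s)=2^{s-1}$ as the base value, and for $r<k$ reduce the exponent via $2^k+1\equiv 2^{(k\bmod r)}+1\pmod{2^r-1}$ --- but they genuinely diverge on the range $k<r<2k$. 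There the paper keeps the modulus and swaps the exponent: by Claim \ref{claim:gold}, writing $r=k+r'$, the inverses of $2^k+1$ and $2^{r'}+1$ modulo $2^r-1$ have equal weight, and the induction runs on $\ell=k/s$. You instead keep the exponent and reflect the modulus, using Lemma \ref{lem:oeven}(b) to get $wt(\dego(n))=wt(\dego(2k-n))+n-k$, inside a lexicographic induction on $(k,n)$. These are two faces of the same underlying symmetry ($2^k\equiv -1 \pmod{2^k+1}$), and both routes need exactly the gcd/parity bookkeeping you flag --- each reduction must preserve $s=\gcd(n,k)$ and the oddness of the quotient --- which the paper performs only implicitly, e.g.\ in asserting that $r'=s\ell'$ with $\ell'<\ell$ and that the inductive hypothesis applies to $2^{s\ell'}+1$. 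What your variant buys is an explicitly well-founded recursion (the paper leaves termination implicit) and a transparent weight ledger at the reflection step; what the paper's variant buys is that every reduction stays inside the Gold family at the same modulus, so the inductive hypothesis applies verbatim without introducing the auxiliary quantity $\dego(2k-n)$ or a coprimality check for $2^{2k-n}-1$.
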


\begin{proof} Statement (a) follows from Theorem \ref{main} (a). To prove (b), set
$$
u := \frac{(2^k+1) \cdot \dego(r)-1}{2^r-1} - 1.
$$
From the definition of $\dego(r)$, it easily follows that $u < 2^k$. Lemma \ref{lem:weight} combined with the part (a) of this theorem complete the proof of (b).
We prove statement (c) by induction on $k_1$, where $k_1 = k/s$. If $k_1 =1$ (or equivalently $k =s$), then  $n \equiv s \pmod {2s}$, since $n/s$ is odd.
Consequently, $r =s$. Observe that $\normalfont\texttt{Inv}_{2^s+1}(s) = 2^{s-1}$. From (b), we have 
$$
wt\left(\normalfont\texttt{Inv}_{2^s+1}(n) \right)= wt\left( \normalfont\texttt{Inv}_{2^s+1}(s) \right) + \frac{n-s}{2} = 1 + \frac{n-s}{2}.
$$
Suppose now that the statement holds for all $k_1 < \ell$ and take $k = s \ell$. Let $1 \leq r < 2s \ell$ be the residue of $n \pmod {2s \ell}$.
Then by (b) the problem reduces to finding the weight of the inverse of $2^{s\ell}+1$ modulo $2^r-1$. Note that if $r < s\ell$, then there is $\ell' < \ell$ such that
$$
2^{s\ell}+1 \equiv 2^{s\ell'} +1 \pmod {2^r-1}.
$$
Hence the inverse of $2^{s\ell}+1$ modulo $2^r-1$ is equal to the one of $2^{s\ell'} +1$, and we get
\begin{eqnarray*}
wt\left(\normalfont\texttt{Inv}_{2^{s\ell}+1}(n) \right) & =& wt\left( \normalfont\texttt{Inv}_{2^{s\ell}+1}(r) \right) + \frac{n-r}{2} \\
 & =& wt\left( \normalfont\texttt{Inv}_{2^{s\ell'}+1}(r) \right) + \frac{n-r}{2}\\
 & = & 1 + \frac{r-s}{2} +  \frac{n-r}{2}\\
& = & 1 + \frac{n-s}{2}.
\end{eqnarray*}
To complete the proof we must prove the statement for the case $r \geq s\ell$. Let $r = s\ell + r'$. Then $r' = s \ell'$ for some $\ell ' < \ell$.
Using Claim \ref{claim:gold}, the binary weight of the inverses of  $2^{s\ell}+1$ and $2^{s\ell'}+1$ modulo $2^{s\ell + s\ell'}-1$ are equal, which completes the proof.
\end{proof}

\begin{remark}
The reason, why it was possible to determine the binary weight of the inverse of a Gold exponent in (c) of Theorem \ref{main:gold},
is the fact that Algorithm \ref{algo} does not call the algorithm \texttt{Inverse}, when computing the inverse for a Gold exponent.
\end{remark}

\subsection{Kasami's exponent}

We call integers $d = 2^{2k} - 2^k + 1$, where $k$ is a positive integer, Kasami exponents. Such exponents define APN mappings on $\Fn$
if and only if $\gcd(k,n)=1$. The next lemma summarize properties of Kasami exponents, which we need for later results:

\begin{lemma}\label{thm:kasaperm}
Let $k, n$ be positive integers. Then
$\gcd(2^{2k}-2^k + 1, 2^n-1)=1$ if and only if one of the following cases occurs:
\begin{itemize}
\item[(a)] $\frac{n}{\gcd(n,k)}$ is odd, that is $\gcd(n,2k) = \gcd(n,k)$;
\item[(b)] $\frac{n}{\gcd(n,k)}$ is even,  $k$ is even 
 and $\gcd(k,n) = \gcd(3k,n)$.
\end{itemize}
Equivalently, $\gcd(2^{2k}-2^k + 1, 2^n-1)=1$ if and only if one of the following cases occurs:
\begin{itemize}
\item $n$ is odd and $k\geq 1$ is arbitrary 
\item $ n=2^ra~\textrm{and}~k=2^rb,~\textrm{where $a$ is odd and  $1\leq r$}. $
\item $ n=2^r3^ua~\textrm{and}~k=2^s3^vb,$
where $b$ is odd, $1\leq s <r$ and $0\leq u\leq v$.
\end{itemize}
\end{lemma}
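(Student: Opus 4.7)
The plan is to reduce the question to the gcd of $2^a+1$ with $2^n-1$ via the factorization $(2^k+1)(2^{2k}-2^k+1) = 2^{3k}+1$. To use this effectively I first establish a quantitative form of Lemma \ref{lem:McEliece}: for all positive integers $a,n$,
$$\gcd(2^a+1,\, 2^n-1) = \begin{cases} 2^{\gcd(n,a)}+1 & \text{if } n/\gcd(n,a) \text{ is even,}\\ 1 & \text{otherwise,}\end{cases}$$
which follows from $2^a+1 = (2^{2a}-1)/(2^a-1)$ together with $\gcd(2^a+1, 2^a-1)=1$. I will also compute $\gcd(2^{2k}-2^k+1,\, 2^k+1)$ via the Euclidean step $2^{2k}-2^k+1 = (2^k-2)(2^k+1) + 3$, obtaining $\gcd(3, 2^k+1)$; this equals $3$ for $k$ odd and $1$ for $k$ even, cleanly splitting the analysis.

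For $k$ odd, $3$ divides $2^{2k}-2^k+1$, so $n$ even forces $3 \mid \gcd(2^{2k}-2^k+1, 2^n-1)$; conversely, when $n$ is odd then $n/\gcd(n,3k)$ is odd, so the quantitative lemma gives $\gcd(2^{3k}+1, 2^n-1)=1$, and since $(2^{2k}-2^k+1)\mid 2^{3k}+1$ the full gcd is $1$. For $k$ odd, condition (a) reduces to ``$n$ odd'' and (b) is vacuous, matching the statement. For $k$ even, the two factors $2^{2k}-2^k+1$ and $2^k+1$ are coprime, so multiplicativity of the gcd on coprime factors yields
$$\gcd(2^{2k}-2^k+1,\, 2^n-1) \;=\; \frac{\gcd(2^{3k}+1,\, 2^n-1)}{\gcd(2^k+1,\, 2^n-1)},$$
which is $1$ iff the two gcds agree. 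Because $v_2(3k) = v_2(k)$, the ratios $n/\gcd(n,3k)$ and $n/\gcd(n,k)$ share their parity: when both are odd, both gcds are $1$, giving case (a); when both are even, equality of $2^{\gcd(n,3k)}+1$ and $2^{\gcd(n,k)}+1$ reduces to $\gcd(n,3k) = \gcd(n,k)$, giving case (b).

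To finish, I will show the two formulations coincide by routine $2$-adic and $3$-adic bookkeeping. Condition (a) is equivalent to $v_2(n) \leq v_2(k)$, which accounts for the first bullet ($v_2(n) = 0$) and the second ($1 \leq v_2(n) = r \leq v_2(k)$). Condition (b) requires $k$ even, $v_2(n) > v_2(k)$, and $\gcd(n,3k) = \gcd(n,k)$; the last condition is equivalent to $v_3(n) \leq v_3(k)$, and together these match the third bullet. The main technical step is the quantitative strengthening of Lemma \ref{lem:McEliece}; after that, the rest is valuation arithmetic and direct application of the factorizations.
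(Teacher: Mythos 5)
Your proof is correct and follows essentially the same route as the paper: the factorization $(2^k+1)(2^{2k}-2^k+1)=2^{3k}+1$, the quantitative form of Lemma \ref{lem:McEliece} (which the paper also invokes implicitly when writing $\gcd(2^k+1,2^n-1)=2^{\gcd(k,n)}+1$), the computation $\gcd(2^{2k}-2^k+1,\,2^k+1)=\gcd(3,\,2^k+1)$, the $3$-divisibility obstruction for $k$ odd and $n$ even, and the reduction of the even case to $\gcd(n,3k)=\gcd(n,k)$. A minor bonus on your side: your one-step Euclidean reduction to $\gcd(3,2^k+1)$ is slicker than the paper's chain, and you explicitly verify the equivalence with the second, valuation-based formulation, which the paper states without proof.
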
 
To prove this lemma above, we need to recall some propositions.
\begin{prop}\label{lem:kasa_quad}
Let $k$ be an integer. Then
\begin{displaymath}
\gcd(2^{2k} - 2^k + 1, 2^k + 1) = \left\{ \begin{array}{ll}
1 & \textrm{ if $k$ is even,}\\
3 & \textrm{ otherwise.}
\end{array} \right.
\end{displaymath}
\end{prop}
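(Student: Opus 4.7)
The plan is to reduce the computation to a small $\gcd$ by a polynomial-style identity, then read off the answer modulo $3$.

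First I would set $a = 2^k$ so that the two integers become $a^2 - a + 1$ and $a + 1$. A direct computation yields the identity
\begin{equation*}
a^2 - a + 1 = (a+1)(a-2) + 3,
\end{equation*}
which rewrites as $2^{2k} - 2^k + 1 = (2^k + 1)(2^k - 2) + 3$. From this it follows immediately that
\begin{equation*}
\gcd(2^{2k} - 2^k + 1,\, 2^k + 1) = \gcd(3,\, 2^k + 1).
\end{equation*}

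The remaining step is to determine $\gcd(3, 2^k+1)$ according to the parity of $k$. Since $2 \equiv -1 \pmod 3$, we have $2^k + 1 \equiv (-1)^k + 1 \pmod 3$. When $k$ is even this is $2 \pmod 3$, so the gcd is $1$; when $k$ is odd this is $0 \pmod 3$, so the gcd is $3$.

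No step here should pose any real obstacle; the only thing to be careful about is verifying the polynomial identity $a^2 - a + 1 = (a+1)(a-2) + 3$, which is a direct expansion. This reduction is the whole content of the proof.
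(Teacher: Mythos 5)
Your proof is correct, but it takes a different route from the paper's. The paper reduces by subtracting the two arguments: $\gcd(2^{2k}-2^k+1,\,2^k+1)=\gcd(2^{2k}-2^{k+1},\,2^k+1)=\gcd\bigl(2^{k+1}(2^{k-1}-1),\,2^k+1\bigr)=\gcd(2^{k-1}-1,\,2^k+1)$, and then evaluates this last $\gcd$ by appealing to the known formula for $\gcd(2^a+1,2^b-1)$ (Lemma \ref{lem:McEliece}, via the general fact that this $\gcd$ is $2^{\gcd(a,b)}+1$ when $b/\gcd(a,b)$ is even and $1$ otherwise), which with $\gcd(k,k-1)=1$ gives $3$ for $k$ odd and $1$ for $k$ even. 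You instead perform a single Euclidean division in the variable $a=2^k$, namely $a^2-a+1=(a+1)(a-2)+3$, collapsing everything to $\gcd(3,\,2^k+1)$, which is settled by the observation $2\equiv -1 \pmod 3$. Your argument is more elementary and self-contained: it needs no auxiliary lemma on $\gcd$'s of Mersenne-type numbers, and it works uniformly (even at $k=0$, where the paper's intermediate expression $2^{k-1}-1$ is awkward). What the paper's version buys is coherence with its surrounding toolkit — the McEliece-type $\gcd$ lemma is already stated and used repeatedly there, so the reduction to $\gcd(2^{k-1}-1,2^k+1)$ comes for free in context. Both proofs are complete and correct; yours is arguably the cleaner standalone argument.
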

\begin{proof}
\begin{align*}
\gcd(2^{2k} - 2^k + 1, 2^k + 1) &= \gcd(2^{2k} - 2^{k+1}, 2^k + 1) \\
&= \gcd(2^{k+1}(2^{k-1} - 1), 2^k + 1) \\
&= \gcd(2^{k-1} - 1, 2^k + 1) \\
&= \left\{ \begin{array}{ll}
1 & \textrm{ if $k$ is even,}\\
2^{\gcd(k,k-1)} + 1 = 3 & \textrm{ otherwise.}
\end{array} \right.
\end{align*}
Here we apply lemma \ref{lem:McEliece}.
\end{proof}

\begin{prop}\label{lem:keven}
For any even integer $n$ and $2k<n$, if
$$\gcd(2^{2k} - 2^k + 1, 2^n - 1) = 1$$
then $k$ is even.
\end {prop}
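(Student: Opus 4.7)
The plan is to argue by contrapositive: if $k$ is odd, we will show that $\gcd(2^{2k}-2^k+1, 2^n-1) > 1$ whenever $n$ is even, regardless of the condition $2k<n$ (which only fixes the setting in which the Kasami exponent is reduced).

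First, I would invoke Proposition \ref{lem:kasa_quad} in the odd case: when $k$ is odd we have $\gcd(2^{2k} - 2^k + 1, 2^k + 1) = 3$, so in particular $3 \mid 2^{2k} - 2^k + 1$. One can also verify this directly and cheaply, since $2 \equiv -1 \pmod 3$ gives $2^{2k} - 2^k + 1 \equiv 1 - (-1) + 1 \equiv 0 \pmod 3$ when $k$ is odd.

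Next, I would observe that the multiplicative order of $2$ modulo $3$ equals $2$, so that $3 \mid 2^n - 1$ precisely when $n$ is even. Combining the two divisibilities yields $3 \mid \gcd(2^{2k} - 2^k + 1, 2^n - 1)$, contradicting the hypothesis that this gcd equals $1$. Hence $k$ must be even.

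The main (and only) subtlety is noticing that the divisibility by $3$ is the operative obstruction, which is already packaged in Proposition \ref{lem:kasa_quad}; after that the argument is a one-line modular computation. The hypothesis $2k < n$ plays no essential role in this particular direction, so I would simply remark that it is a contextual assumption from the surrounding discussion rather than a needed ingredient of the proof.
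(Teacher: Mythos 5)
Your proof is correct and takes essentially the same route as the paper's: both arguments reduce to observing that for odd $k$ one has $3 \mid 2^{2k}-2^k+1$ (via Proposition~\ref{lem:kasa_quad}, or your equally valid direct computation $2\equiv -1 \pmod 3$) and that $3 \mid 2^n-1$ for even $n$, so $3$ divides the gcd, a contradiction. Your remark that the hypothesis $2k<n$ is not actually needed is also accurate --- the paper's own proof never uses it.
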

\begin{proof}
Suppose that $k$ is odd.
So, from Proposition \ref{lem:kasa_quad}, we have that
$$\gcd(2^{2k} - 2^k + 1, 2^k + 1) =3$$
But $3$ divides $\gcd(2^n - 1, 2^k + 1)$ because $n$ is even.\\
It implies that $3$ divides $\gcd(2^{2k} -2^k + 1, 2^n - 1)$, a contradiction.
\end{proof}

Now we can prove Lemma \ref{thm:kasaperm}.
\begin{proof}[Proof of Lemma \ref{thm:kasaperm}]
Let $d=2^{2k} - 2^k + 1$. We want to determine when $\gcd(d, 2^n-1)=1$.
The first condition {\bf (i)} means that
$$\gcd(2^k + 1, 2^n - 1) = 1$$ (from (\ref{lem:McEliece})).
Then we deduce that  $\gcd(2^{rk} + 1, 2^n - 1) = 1$ for all odd $r$.
In particular, this holds for $r=3$. Using 
$$(2^k + 1)(2^{2k} - 2^k + 1) = 2^{3k} + 1,$$
we conclude that in this case $\gcd(d, 2^n-1)=1$.
Note that  {\bf (i)} is satisfied for any odd $n$.

Now, we assume that $n/gcd(n,k)$ is even; so $n$ is even.
 From Proposition \ref{lem:keven}, we know that $\gcd(d, 2^n-1)\ne 1$ for odd
$k$. So  $k$ must be even.

Again from (\ref{lem:McEliece}), we have in this case
$$\gcd(2^k + 1, 2^n - 1) = 2^{\gcd(k,n)}+1=e,~e> 3,$$
and similarly
\[
\gcd(2^{3k} + 1, 2^n - 1) = 2^{\gcd(3k,n)}+1=u,
\]
where $e$ divides $u$ since $2^{3k} + 1=(2^k + 1)d$.
But, from Proposition \ref{lem:kasa_quad}, we have
$\gcd(d, 2^k + 1) = 1$ since $k$ is even.
Hence $\gcd(d, 2^n-1)=1$ if and only if $u=e$ or, equivalently,
$\gcd(3k,n)=\gcd(k,n)$, completing the condition {\bf (ii)}.
\end{proof}


\begin{prop}
Let $k>1$ be an integer. Then the order of 2 modulo the Kasami exponent $2^{2k} - 2^k + 1$ is $\ordb = 6k$.
\end{prop}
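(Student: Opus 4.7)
The plan is to pin down the order $\theta := \theta_{d}$ of $2$ modulo $d = 2^{2k}-2^k+1$ by first producing a small multiple of $\theta$, then showing $\theta$ is neither too small nor on a ``wrong'' divisor.

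\textbf{Step 1: $\theta \mid 6k$ and $\theta \nmid 3k$.} Exploit the factorisation
\[
(2^k+1)(2^{2k}-2^k+1) = 2^{3k}+1,
\]
so $d \mid 2^{3k}+1$, giving $2^{3k}\equiv -1\pmod d$ and hence $2^{6k}\equiv 1\pmod d$. Thus $\theta \mid 6k$. On the other hand, since $k>1$ we have $d\geq 13>2$, so $-1\not\equiv 1\pmod d$; therefore $2^{3k}\not\equiv 1\pmod d$, which rules out $\theta\mid 3k$.

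\textbf{Step 2: classify divisors of $6k$ not dividing $3k$.} Write $3k = 2^a\cdot b$ with $b$ odd, so $6k = 2^{a+1}\cdot b$. Every divisor of $6k$ that is not a divisor of $3k$ has the form $2^{a+1}c$ with $c\mid b$; equivalently, $6k/\theta$ is an odd divisor of $6k$. In particular, $6k/\theta \in\{1,3,5,\dots\}$.

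\textbf{Step 3: a lower bound forcing $6k/\theta\leq 3$.} Since $\theta>0$ and $d\mid 2^\theta-1$ we have $2^\theta\geq d+1 = 2^{2k}-2^k+2 > 2^{2k-1}$ for $k\geq 2$, giving $\theta\geq 2k$. Combined with Step 2, the only possibilities are $\theta=6k$ (the desired conclusion) or $\theta=2k$ (corresponding to $6k/\theta=3$).

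\textbf{Step 4: eliminate $\theta=2k$.} Reduce $2^{2k}-1$ modulo $d$: since $2^{2k}\equiv 2^k-1\pmod d$, we get $2^{2k}-1\equiv 2^k-2\pmod d$. For $k\geq 2$ one has $0<2^k-2<2^{2k}-2^k+1 = d$, so $2^{2k}\not\equiv 1\pmod d$, ruling out $\theta=2k$. Hence $\theta = 6k$.

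The main obstacle is really just Step 3 combined with Step 4 — one must be careful that the size estimate $2^\theta\geq d+1$ is strong enough to eliminate all odd cofactors $6k/\theta\geq 5$, which is why we need the explicit bound $d+1>2^{2k-1}$ (valid for $k\geq 2$, matching the hypothesis $k>1$ of the proposition). Everything else is a short algebraic manipulation.
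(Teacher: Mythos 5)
Your proof is correct, but it follows a genuinely different route from the paper's. The paper proves the statement by showing that $3k$ is the \emph{least} positive integer $\sigma$ with $2^{\sigma}\equiv -1 \pmod{d}$, where $d=2^{2k}-2^k+1$: it notes $d\mid 2^{3k}+1$, and then rules out every smaller $\sigma$ by a direct residue scan, using that $2^{l}<2^{2k}-2^k$ for $0<l\leq 2k-1$ and that $2^{2k+i}\equiv 2^{i}(2^{k}-1)\pmod{d}$ with $2^{i}(2^{k}-1)<2^{2k}-2^{k}$ for $0\leq i<k$; it then concludes the order is $6k$ via the (unstated but standard) fact that when $-1$ lies in the cyclic group generated by $2$, the order equals twice the minimal exponent attaining $-1$. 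You instead work with the order $\theta$ directly: from $d\mid 2^{3k}+1$ you get $\theta\mid 6k$ and $\theta\nmid 3k$, you observe that these two conditions force $6k/\theta$ to be odd, and the crude size bound $2^{\theta}\geq d+1>2^{2k-1}$ then leaves only the candidates $\theta\in\{2k,6k\}$; a single reduction $2^{2k}-1\equiv 2^{k}-2\pmod{d}$, nonzero for $k\geq 2$, eliminates $2k$. What your approach buys is economy: the paper's $k$-fold family of checks $2^{i}(2^{k}-1)\not\equiv -1$ collapses to one congruence plus elementary divisor and size bookkeeping, and your argument makes transparent exactly where the hypothesis $k>1$ enters (namely $2^{k}-2\neq 0$), matching the genuine failure of the claim at $k=1$, where the order of $2$ modulo $3$ is $2$, not $6$. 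What the paper's route buys is the slightly stronger intermediate fact that $3k$ is the minimal exponent with $2^{\sigma}\equiv -1\pmod{d}$, though at the cost of relying implicitly on the half-order lemma, which your argument avoids entirely.
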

\begin{proof}
It is enough to show that $3k$ is the least positive integer satisfying $2^{3k} \equiv -1  \pmod{2^{2k} -2^k + 1}$.
The congruence $2^{3k} \equiv -1  \pmod{2^{2k} -2^k + 1}$ holds, since clearly $2^{2k} -2^k + 1$ divides $2^{3k}+1$.
Let  an integer $0 < \sigma <3k$ be such that $2^{\sigma} \equiv -1 \pmod{2^{2k} - 2^k + 1}$. Then $\sigma \geq 2k-1$, since 
$1 < 2^l < 2^{2k} -2^k $ for $ 0 < l \leq 2k-1$. Hence suppose $ \sigma = 2k + i$ with $ 0 \leq i < k$. Note that
$$
2^{2k+i} \equiv 2^i (2^k-1) \pmod{2^{2k} -2^k + 1}.
$$
To complete the proof it remains to observe that $2^i (2^k-1) <2^{2k} -2^k$ if $ 0 \leq i < k$. 

\end{proof}

Theorem \ref{main} implies for the Kasami exponents:

\begin{thm}\label{thm:kasami}
Let $n, k \geq 1$ with $\gcd(2^{2k}-2^k+1,2^n-1) = 1$. Let $r$ be the least positive residue of $n$ modulo $6k$.
 Then
\begin{description}
\item[(a)] $\dekas(n)$ is equal to
\begin{eqnarray*}
 \dekas(r) \cdot 2^{n-r} + \left( \frac{(2^{2k}-2^k+1) \cdot \dekas(r)-1}{2^r-1} - 1 \right)  \frac{2^{n-r}-1}{2^{2k}-2^k+1}.
\end{eqnarray*}
\item[(b)]
Let
\begin{itemize}
\item[-] $\boldsymbol{g_n}$ be
the binary sequence of length $n$ representing $\de[2^{2k}-2^k+1](n)$;
\item[-] $\boldsymbol{w_{3k}}$ be
the binary sequence of length $3k$  representing
$$
\left( \frac{(2^{2k}-2^k+1) \cdot \dekas(r)-1}{2^r-1} - 1 \right)\cdot (2^k+1) -1;
$$
\item[-] $\boldsymbol{a_r}$ be
the binary sequence of length $r$  representing $ \de[2^{2k}- 2^k+1] (r)$.
\end{itemize}
Then $\boldsymbol{g_n}$ is obtained by concatenating sequences  $\boldsymbol{a_r}$,  $\boldsymbol{w_{3k}}$ and $\boldsymbol{\bar{w}_{3k}}$ as
follows:
$$
\boldsymbol{g_n} 
= \boldsymbol{a_r}|\boldsymbol{w_{3k}}|  \boldsymbol{\bar{w}_{3k}} | \ldots | \boldsymbol{w_{3k}} |  \boldsymbol{\bar{w}_{3k}}.
$$
In particular, $wt(\boldsymbol{g_n}) = wt(\boldsymbol{a_r}) + \frac{n-r}{2}$. 
\end{description}
\end{thm}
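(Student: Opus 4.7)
The plan is to derive both parts directly from Theorem \ref{main} and Corollary \ref{main-comb}, using the preceding proposition that $\ordb = 6k$ for the Kasami exponent $d = 2^{2k}-2^k+1$ (when $k>1$). Part (a) is then just the substitution $\ordb = 6k$ into formula (\ref{eq:main1}) of Theorem \ref{main}(a); the assumption $\gcd(d,2^n-1)=1$ together with $n\equiv r\pmod{6k}$ ensures that $\gcd(d,2^r-1)=1$, so $\dekas(r)$ is well defined and the formula applies.

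For part (b), Corollary \ref{main-comb} immediately yields
$$\boldsymbol{t_n} = \boldsymbol{a_r}\,|\,\boldsymbol{u_{6k}}\,|\,\boldsymbol{u_{6k}}\,|\,\ldots\,|\,\boldsymbol{u_{6k}},$$
where $\boldsymbol{u_{6k}}$ is the length-$6k$ binary representation of $\left(\tfrac{d\cdot\dekas(r)-1}{2^r-1}-1\right)\cdot\tfrac{2^{6k}-1}{d}$. The key arithmetic step is the factorization
$$\frac{2^{6k}-1}{2^{2k}-2^k+1} \;=\; \frac{(2^{3k}-1)(2^{3k}+1)}{2^{2k}-2^k+1} \;=\; (2^{3k}-1)(2^k+1),$$
which comes from the identity $(2^k+1)(2^{2k}-2^k+1) = 2^{3k}+1$. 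Setting
$$u := \left(\frac{d\cdot\dekas(r)-1}{2^r-1}-1\right)(2^k+1),$$
the sequence $\boldsymbol{u_{6k}}$ therefore represents $u\cdot(2^{3k}-1)$. Assuming $0 < u < 2^{3k}$, Lemma \ref{lem:weight} applied with $s=3k$ implies that the length-$6k$ representation of $u(2^{3k}-1)$ is exactly $\boldsymbol{w_{3k}}\,|\,\boldsymbol{\bar{w}_{3k}}$, where $\boldsymbol{w_{3k}}$ is the length-$3k$ representation of $u-1$; this is precisely the integer listed in the theorem statement. The weight claim is then automatic, since each block $\boldsymbol{w_{3k}}\,|\,\boldsymbol{\bar{w}_{3k}}$ has weight $3k$, and there are $(n-r)/(6k)$ such blocks, contributing $(n-r)/2$ ones in total.

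The only step requiring actual work is the verification of $0<u<2^{3k}$. For the upper bound, because $d$ is odd we have $\dekas(r)\leq 2^r-2$, so $\tfrac{d\cdot\dekas(r)-1}{2^r-1}\leq\tfrac{d(2^r-2)-1}{2^r-1}<d$, giving $u\leq(d-2)(2^k+1)=2^{3k}-2^{k+1}-1<2^{3k}$. For the lower bound, $\tfrac{d\cdot\dekas(r)-1}{2^r-1}=1$ would force $d\mid 2^r$, which is impossible since $d$ is an odd integer $>1$; hence $\tfrac{d\cdot\dekas(r)-1}{2^r-1}\geq 2$ and $u\geq 2^k+1>0$. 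I expect this bounding of $u$ to be the only non-routine part; everything else is a mechanical instantiation of Theorem \ref{main}, Corollary \ref{main-comb}, and Lemma \ref{lem:weight}.
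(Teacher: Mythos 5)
Your proposal is correct and takes essentially the same route as the paper, whose proof of this theorem is a one-line reference to Theorem \ref{main}, Lemma \ref{lem:weight}, and the argument given for the Gold case in Theorem \ref{main:gold}. The details you supply---the factorization $\frac{2^{6k}-1}{2^{2k}-2^k+1}=(2^{3k}-1)(2^k+1)$ and the verification that $0<u<2^{3k}$ so that Lemma \ref{lem:weight} applies with $s=3k$---are exactly the steps the paper leaves implicit.
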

\begin{proof}
The statement follows from Theorem \ref{main} and Lemma \ref{lem:weight}, similarly to the proof of Theorem \ref{main:gold}.
\end{proof}

\begin{exmp}
Consider the Kasami exponent $13 = 2^4-2^2+1$.  Lemma \ref{thm:kasaperm} shows that $\gcd(13, 2^n-1)=1$ if and only if $n \not\equiv 0 \pmod{12}$. 
Then using Theorem \ref{thm:kasami}, we get:
\begin{itemize}
\item if $n \equiv 1 \pmod{12}$ then
$$
\de[13](n) = 2^{n-1} + 11 \cdot\frac{2^{n-1}-1}{13},
$$
since $\de[13](1) = 1$. The  binary weight of $\de[13](n)$ is $(n+1)/2$.
\item if $n \equiv 2 \pmod{12}$ then
$$
\de[13](n) = 2^{n-2} + 3 \cdot\frac{2^{n-2}-1}{13}
$$
since $\de[13](2) = 1$. The  binary weight of $\de[13](n)$ is $n/2$.

\item if $n \equiv 3 \pmod{12}$ then
$$
\de[13](n) = 6\cdot 2^{n-3} + 10\cdot\frac{2^{n-3}-1}{13}
$$
since $\de[13](3) = 6$. The  binary weight of $\de[13](n)$ is $(n+1)/2$.
\item if $n \equiv 4 \pmod{12}$ then
$$
\de[13](n) = 7\cdot 2^{n-4} + 5\cdot \frac{2^{n-4}-1}{13}
$$
since $\de[13](4) = 7$. The  binary weight of $\de[13](n)$ is $(n+2)/2$.

\item if $n \equiv 5 \pmod{12}$ then
$$
\de[13](n) = 12 \cdot 2^{n-5} +  5\cdot\frac{2^{n-5}-1}{13}
$$
since $\de[13](5) = 12$. The  binary weight of $\de[13](n)$ is $(n-1)/2$.
\item if $n \equiv 6 \pmod{12}$ then
$$
\de[13](n) = 34\cdot 2^{n-4} + 6\cdot \frac{2^{n-4}-1}{13}
$$
since $\de[13](6) = 34$. The  binary weight of $\de[13](n)$ is $(n-2)/2$.\\

Then using identity (\ref{eq:weights}), we obtain that:

\item if $n \equiv 7 \pmod{12}$ then the  binary weight of $\de[13](n)$ is $(n-1) /2$,
since $wt(\de[13](7)) = 3.$

\item if $n \equiv 8 \pmod{12}$ then the  binary weight of $\de[13](n)$ is $(n+2) /2$,
since $wt(\de[13](8)) = 5.$

\item if $n \equiv 9 \pmod{12}$ then
the  binary weight of $\de[13](n)$ is $(n+1)/2$, since  $wt(\de[13](9)) = 5.$

\item if $n \equiv 10 \pmod{12}$ then the  binary weight of $\de[13](n)$ is $n /2$,
since $wt(\de[13](10)) = 5.$

\item if $n \equiv 11 \pmod{12}$ then
the  binary weight of $\de[13](n)$ is $(n+1)/2$, since  $wt(\de[13](11)) = 6.$

\end{itemize}
\end{exmp}

{\bf Open question:} Is it possible to express the binary weight of the inverse of Kasami's exponent $2^{2k}-2^k+1$ modulo $2^n-1$ in terms of $k$ and $n$?\\

\begin{table}[htb]
\begin{center}
\begin{tabular}{|c||c|c|c|c|c|c|c|c|c|}
\hline
$r$ &1&3&5&7&9&11&13&15&17\\
\hline
$wt(\de(r))$ &1&1&2&4&2&6&6&7&9\\  
\hline
\end{tabular} 
\caption{Weights of the inverse of $d = 2^6 - 2^3 +1$ modulo $2^r-1$, $1 \leq r \leq 17$}
\label{tab:k=3}
\end{center} 
\end{table}

\begin{table}[htb]
\begin{center}
\begin{tabular}{|c||c|c|c|c|c|c|c|c|c|c|c|c|}
\hline
$r$ &1&2&3&4&5&6&7&8&9&10&11&12\\
\hline
$wt(\de(r))$ &1&1&2&1&3&2&4&5&5&3&4&2\\
\hline
\hline
$r$ &13&14&15&16&17&18&19&20&21&22&23& \\
\hline
$wt(\de(r))$ &5&5&8&9&9&8&10&9&11&11&12& \\
\hline
\end{tabular} 
\caption{Weights of the inverse of $d=2^8 - 2^4 +1$ modulo $2^r-1$, $1 \leq r \leq 23$}
\label{tab:k=4}
\end{center} 
\end{table}

\begin{table}[htb]
\begin{center}
\begin{tabular}{|c||c|c|c|c|c|c|c|c|c|c|c|c|c|c|c|}
\hline
$r$ &1&3&5&7&9&11&13&15&17&19&21&23&25&27&29 \\
\hline
$wt(\de(r))$ &1&2&1&3&5&5&7&2&9&9&11&11&11&14&15\\
\hline
\end{tabular} 
\caption{Weights of the inverses of $d=2^{10} - 2^5 +1$ modulo $2^r-1$, $1 \leq r \leq 29$}
\label{tab:k=5}
\end{center} 
\end{table}

Tables \ref{tab:k=3}--\ref{tab:k=5} contain the weights of the inverses for the Kasami exponents defined with $k=3,4,5$.
Some of the values in these tables follow from the results of Proposition \ref{prop:kas-weights}:

\begin{prop}\label{prop:kas-weights}
Let $k, n\geq 1$ and $\gcd(2^{2k}-2^k+1, 2^n-1)=1$. 
\begin{itemize}

\item[(a)] If $n \equiv b \pmod{6k}$ with $b\geq 1$ a divisor of $k$, then $\dekas(k/b) = 1$ and
$$
\dekas(n) = 2^{n-k/b} + \left( \frac{2^k(2^k-1)}{2^{k/b}-1} -1 \right) \cdot \frac{2^{n-k/b}-1}{2^{2k}-2^k+1}.
$$

\item[(b)] $\dekas(k-1)  =  \frac{2^k-1}{3}$.

\item[(c)]
 $\dekas(k+1)  = \frac{2^{k+2}-1}{3}+1$.

\item[(d)]
$\dekas(2k) = (2^{k}+2)\cdot\frac{2^k-1}{3} +1$.

\item[(e)] If $n \equiv 3k/b \pmod{6k}$, where $b\geq 1$ is a divisor of $k$ with $\gcd(b,3)=1$. Let $b'$ be the least positive residue of $b$ modulo 3, then $\dekas(3k/b) = 2^{3k/b-1} + 2^{b'\cdot k/b - 1}$.

\item[(f)]
$\dekas(4k) = (1 + 2^{k+1} + 2^{2k} + 2^{3k+1})\cdot\frac{2^k-1}{3} +2^k$.

\item[(g)]
 $\dekas(5k) = 2^{5k}-2^{4k} + 2^{2k} + 2^k -1 \equiv 2^{2k}(2^{4k} - 2^{2k} + 1) \pmod{2^{5k}-1}$.

\item[(h)]
$\dekas(6k - 1) = (2^{3k}-1)(2^k+1)$.
\end{itemize}

\end{prop}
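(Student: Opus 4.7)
My plan is to prove each of (a)--(h) by elementary modular verification, and in (a) to combine this with Theorem \ref{main}(a) in order to extend the value $\dekas(r)$ to $\dekas(n)$. In every part the scheme is: given a candidate value $T$ for $\dekas(r)$, I verify (i) that $0<T<2^r-1$, which is a routine size check, and (ii) that $d\cdot T \equiv 1 \pmod{2^r-1}$, where $d=2^{2k}-2^k+1$. Step (ii) is always performed by reducing higher powers of $2$ via $2^r\equiv 1\pmod{2^r-1}$, after which the identity becomes a short polynomial identity in powers of $2$. The coprimality $\gcd(d,2^r-1)=1$ is handled in advance through Lemma \ref{thm:kasaperm}.

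For (a), the hypothesis $b\mid k$ gives $2^{k/b}-1\mid 2^k-1$, so $d\equiv 1-1+1=1\pmod{2^{k/b}-1}$ and hence $\dekas(k/b)=1$; substituting $r=k/b$ into Theorem \ref{main}(a) yields the displayed closed form. For (b), modulo $2^{k-1}-1$ one has $2^k\equiv 2$ and $2^{2k}\equiv 4$, hence $d\equiv 3$, and $3\cdot(2^k-1)/3=2\cdot 2^{k-1}-1\equiv 1$. For (c), modulo $2^{k+1}-1$ one has $2^{2k}\equiv 2^{k-1}$, so $d\equiv 3\cdot 2^{k-1}$; rewriting the candidate as $2(2^{k+1}+1)/3$, its product with $d$ equals $2^{2k+1}+2^k\equiv 2^{k+1}\equiv 1$. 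For (d), rewrite the candidate as $(2^{2k}+2^k+1)/3$ and use the key identity $d(2^k+1)=2^{3k}+1$:
$$ d\cdot\frac{2^{2k}+2^k+1}{3}=\frac{2^{6k}-1}{3(2^{2k}-1)}\equiv\frac{2^{4k}+2^{2k}+1}{3}\equiv\frac{3}{3}=1\pmod{2^{2k}-1}. $$

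For (e), set $s=k/b$ and write $b=3q+b'$ with $b'\in\{1,2\}$; then $2^{3s}\equiv 1\pmod{2^{3s}-1}$ gives $2^k\equiv 2^{b's}$ and $2^{2k}\equiv 2^{2b's\bmod 3s}$, so $d\equiv 2^{2s}-2^s+1$ when $b'=1$ and $d\equiv 2^s-2^{2s}+1$ when $b'=2$. In both cases the product $d\cdot(2^{3s-1}+2^{b's-1})$ expands into six monomials, whose exponents $\geq 3s$ are reduced by $2^{3s}\equiv 1$, producing four pairwise cancellations and a surviving $2\cdot 2^{3s-1}=2^{3s}\equiv 1$. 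Parts (f) and (g) follow the same template, organized around the factorizations $(1+2^{k+1})(1+2^{2k})=1+2^{k+1}+2^{2k}+2^{3k+1}$ and $(2^{2k}-2^k+1)(2^{4k}-2^{2k}+1)=2^{6k}-2^{5k}+2^{3k}-2^k+1$; in (g) one multiplies the latter by $2^{2k}$ and reduces modulo $2^{5k}-1$ to obtain $1$. Finally, (h) is the slickest: using $d(2^k+1)=2^{3k}+1$,
$$ d\,(2^{3k}-1)(2^k+1)=(2^{3k}-1)(2^{3k}+1)=2^{6k}-1\equiv 2-1=1\pmod{2^{6k-1}-1}, $$
since $2^{6k}=2\cdot 2^{6k-1}\equiv 2$.

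The main bookkeeping burden lies in (f) and in the two subcases of (e), where up to eight monomials must be tracked through the reduction; these manipulations are mechanical but easy to miscount. In each of the other parts the computation collapses to a one-line modular identity, and the range checks $T<2^r-1$ and the applicability of Lemma \ref{thm:kasaperm} present no genuine obstacle.
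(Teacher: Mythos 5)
Your proposal is correct, and it supplies precisely what the paper omits: Proposition \ref{prop:kas-weights} is stated there with no proof at all, in keeping with the remark opening Section \ref{nkdepend} that once a candidate inverse has been guessed, its correctness follows from easy calculations. Your scheme --- size check $0<T<2^r-1$ plus the congruence $d\cdot T\equiv 1\pmod{2^r-1}$, with coprimality supplied by Lemma \ref{thm:kasaperm} and, in part (a), the extension from the residue $r$ to general $n$ via Theorem \ref{main}(a) --- is exactly the verification pattern the authors themselves use for Theorems \ref{th:dobbertin} and \ref{thm:welch}. Your pivotal identities all check out: $d(2^k+1)=2^{3k}+1$ for (d) and (h); $d\equiv 3\pmod{2^{k-1}-1}$ and $d\equiv 3\cdot 2^{k-1}\pmod{2^{k+1}-1}$ (the latter since $1-2^{k-1}\equiv 2^{k+1}-2^{k-1}$); the two reductions of $d$ modulo $2^{3s}-1$ in (e) with the stated four cancellations and surviving term $2\cdot 2^{3s-1}\equiv 1$; the factorization $(1+2^{k+1})(1+2^{2k})=1+2^{k+1}+2^{2k}+2^{3k+1}$ in (f); and, with $x=2^k$, the identity $(x^2-x+1)(x^4-x^2+1)=x^6-x^5+x^3-x+1$ followed by multiplication by $x^2$ and reduction mod $x^5-1$ in (g). The values also agree with the paper's worked example for $d=13$.

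Two blemishes, neither fatal. First, in (d) the step $\frac{2^{4k}+2^{2k}+1}{3}\equiv\frac{3}{3}\pmod{2^{2k}-1}$ cannot be read as modular division, because $3$ divides $2^{2k}-1$ and hence is not invertible; the correct justification is $2^{4k}+2^{2k}+1-3=(2^{2k}-1)(2^{2k}+2)$ together with $3\mid 2^{2k}+2$, so the difference is still a multiple of $2^{2k}-1$ after division by $3$. Relatedly, the integrality of $\frac{2^k-1}{3}$ in (b), (d), (f) and of $\frac{2^{k+2}-1}{3}$ in (c) requires $k$ even; you should make explicit that this follows from the hypothesis $\gcd(2^{2k}-2^k+1,2^n-1)=1$ via Lemma \ref{thm:kasaperm}, since for $n=k\pm 1$, $2k$, $4k$ the quotient $n/\gcd(n,k)$ is even, which forces $k$ even. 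Second, the paper's statement of (a) literally hypothesizes $n\equiv b\pmod{6k}$ while its displayed formula is the Theorem \ref{main}(a) specialization at residue $r=k/b$; your silent choice $r=k/b$ is the only reading consistent with both the formula and the claim that the inverse of $d$ modulo $2^{k/b}-1$ equals $1$ (and it is harmless, since $k/b$ runs over the divisors of $k$ as $b$ does), but the discrepancy in the printed statement deserves an explicit sentence.
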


Note that in cases (c)--(k) of the above proposition $n$ depends on $k$, and hence we are in a similar situation discussed in Section \ref{nkdepend}.

For any $b$ dividing $k$ and being coprime to 5, we conjecture that $\dekas(5k/b)$ is congruent to a Kasami exponent, that is, $\dekas(5k/b) \equiv 2^u (2^{2v} - 2^v +1) \pmod{2^{5k/b}-1}$ for some integers $u$ and $v$.

Finally we want to remark that in some cases the computation of inverses for Kasami exponents can be reduced to the one for the Gold exponents.

\begin{claim}
Let the positive integers $k,\,n$ be such that $\frac{n}{\gcd(n,k)}$ is odd. Then both $2^k+1$ and  $2^{3k}+1$ are coprime to $2^n-1$, and therefore
$$\de(n) \equiv (2^k+1)\de[2^{3k}+1](n) \pmod{2^n-1}$$
holds. 
\end{claim}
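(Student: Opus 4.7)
The plan has two parts: verify coprimality, then exploit the factorization $(2^k+1)(2^{2k}-2^k+1)=2^{3k}+1$.

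For coprimality, Lemma~\ref{lem:McEliece} already gives $\gcd(2^k+1,2^n-1)=1$ from the hypothesis that $n/\gcd(n,k)$ is odd. To apply the same lemma to $2^{3k}+1$, I must check that $n/\gcd(n,3k)$ is also odd. Writing $n=2^a m$ with $m$ odd, the assumption forces $2^a\mid \gcd(n,k)$, hence $2^a\mid k$, hence $2^a\mid 3k$, so $2^a\mid \gcd(n,3k)$. Therefore $n/\gcd(n,3k)$ divides the odd number $m$ and is itself odd, giving $\gcd(2^{3k}+1,2^n-1)=1$. Since $2^{2k}-2^k+1$ divides $2^{3k}+1$, it is also coprime to $2^n-1$, so $\de(n)$ is well-defined.

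For the congruence, the key identity
\[
(2^k+1)(2^{2k}-2^k+1)=2^{3k}+1
\]
gives
\[
(2^{2k}-2^k+1)\cdot\bigl[(2^k+1)\,\de[2^{3k}+1](n)\bigr]=(2^{3k}+1)\,\de[2^{3k}+1](n)\equiv 1\pmod{2^n-1},
\]
so by uniqueness of the inverse modulo $2^n-1$, the integer $(2^k+1)\,\de[2^{3k}+1](n)$ represents $\de(n)$ modulo $2^n-1$, which is the stated claim.

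There is no real obstacle here; the only non-cosmetic step is the 2-adic valuation argument needed to transfer the McEliece hypothesis from $k$ to $3k$, and that is a one-line observation. Everything else is algebraic manipulation built on the factorization of $2^{3k}+1$.
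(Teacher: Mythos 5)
Your proof is correct and matches the paper's reasoning: the paper states this claim without a separate proof, but its proof of Lemma~\ref{thm:kasaperm} uses exactly your ingredients --- Lemma~\ref{lem:McEliece}, the observation that $\gcd(2^{rk}+1,2^n-1)=1$ for all \emph{odd} $r$ (your $2$-adic valuation step, with $r=3$), and the factorization $(2^k+1)(2^{2k}-2^k+1)=2^{3k}+1$. Your final step, multiplying $(2^k+1)\,\de[2^{3k}+1](n)$ by the Kasami exponent and invoking uniqueness of inverses modulo $2^n-1$, is the intended one-line verification.
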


\subsection{$2^k-1$ exponent}
In \cite{BCC11} it is shown that the power mappings with exponents $2^k - 1$ have interesting properties for cryptological applications.
It is well known that $\gcd(2^n-1, 2^k-1) = 2^{\gcd(n,k)}-1$, and therefore  $2^k-1$ is invertible modulo $2^n-1$ if and only if $n$ and $k$ are coprime.
Moreover this indicates also that the calculation of the inverse of $2^k-1$ modulo $2^n-1$ reduces to the one of $k$ modulo $n$.

\begin{thm}\label{proposition-basic1}
Let $n, k\geq 2$ be  coprime integers. Then
$$
\normalfont{\texttt{Inv}}_{2^k-1}(n) \equiv \frac{2^{k\cdot s}-1}{2^k -1}\pmod{2^n-1},
$$
where $s$ is any positive integer satisfying $s \cdot k \equiv 1 \pmod{n}$.
More precisely, if  $k_n^{-1}$ is  the least positive residue of the inverse of $k$ modulo $n$, then
\begin{equation}\label{eqn-basic1}
\normalfont{\texttt{Inv}}_{2^k-1}(n) = \sum_{i=0}^{k_n^{-1}-1}2^{ki \!\!\!\pmod{n}}.
\end{equation}
\end{thm}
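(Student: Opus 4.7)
The plan is to verify the congruence directly by multiplying out $(2^k-1) \cdot t$ where $t = \frac{2^{ks}-1}{2^k-1}$, and then to sharpen the congruence to the exact equality in the second assertion by showing the proposed binary expansion has distinct bits.

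First I would observe that $t = \frac{2^{ks}-1}{2^k-1}$ is the geometric sum $1 + 2^k + 2^{2k} + \cdots + 2^{k(s-1)}$, which is in particular a positive integer. Multiplying, $(2^k-1)\cdot t = 2^{ks}-1$. Write $ks = 1 + mn$ for some nonnegative integer $m$, which is possible since $ks \equiv 1 \pmod n$. Then $2^{ks} = 2 \cdot (2^n)^m \equiv 2 \pmod{2^n-1}$, so $(2^k-1)\cdot t \equiv 1 \pmod{2^n-1}$. Since $\gcd(2^k-1, 2^n-1) = 2^{\gcd(k,n)}-1 = 1$, this proves the first congruence.

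For the explicit formula \eqref{eqn-basic1}, take $s = k_n^{-1}$, so $1 \leq s \leq n-1$. Reducing each exponent in the geometric sum modulo $n$, and using $2^{ki} \equiv 2^{ki \bmod n} \pmod{2^n-1}$, we obtain
\[
t \equiv \sum_{i=0}^{k_n^{-1}-1} 2^{ki \bmod n} \pmod{2^n-1}.
\]
To promote this congruence to equality with the least positive residue $\normalfont{\texttt{Inv}}_{2^k-1}(n)$, I need to show that the right-hand side already lies strictly between $0$ and $2^n-1$. Here the key ingredient is that $\gcd(k,n)=1$ ensures the map $i \mapsto ki \bmod n$ is a bijection on $\{0,1,\ldots,n-1\}$, so in particular the $k_n^{-1}$ residues $ki \bmod n$ for $i = 0,1,\ldots,k_n^{-1}-1$ are pairwise distinct elements of $\{0,1,\ldots,n-1\}$. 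Thus the sum is a sum of $k_n^{-1}$ distinct powers of $2$, each of the form $2^j$ with $0 \leq j \leq n-1$. Since $k_n^{-1} \leq n-1 < n$, the sum omits at least one of the powers $2^0,\ldots,2^{n-1}$, so it is at most $(2^n-1) - 1 < 2^n-1$; it is also at least $2^0 = 1$. Hence the sum equals $\normalfont{\texttt{Inv}}_{2^k-1}(n)$.

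No step is really an obstacle: the proof is a direct computation combined with one combinatorial observation about the bijection $i \mapsto ki \bmod n$. The only subtle point worth being careful about is distinguishing the mod-$(2^n-1)$ congruence (trivial from the geometric series identity) from the stronger claim that the proposed binary expansion already represents the least positive residue.
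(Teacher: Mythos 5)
Your proposal is correct and takes essentially the same route as the paper: the paper also multiplies $t=\frac{2^{ks}-1}{2^k-1}$ by $2^k-1$, writes $ks-1=nm$ to get $2^{ks}-1\equiv 1\pmod{2^n-1}$, and then obtains the explicit formula by setting $s=k_n^{-1}$. The only difference is that the paper leaves the second assertion as an immediate consequence, whereas you carefully spell out the implicit step -- that $\gcd(k,n)=1$ makes the exponents $ki\bmod n$ pairwise distinct, so the sum of $k_n^{-1}\leq n-1$ distinct powers of $2$ lies strictly between $0$ and $2^n-1$ and is therefore the least positive residue -- which is exactly the detail the paper's one-line conclusion relies on.
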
 
\begin{proof}
Let $s\cdot k - 1 = n\cdot m$.
Then
$$
(2^k-1)\cdot \frac{2^{k\cdot s}-1}{2^k -1} = 2^{k\cdot s}-1 = 2^{nm+1} -1 \equiv 1 \pmod{2^n-1}.
$$
The second statement follows if we put $s = k_n^{-1}$.
\end{proof}

\section{Conclusion}

This paper is motivated by a problem to find explicitly the inverses of the known APN exponents.
We succeed this for Welch and Dobbertin exponents. The case of the exceptional APN exponents,
that is of the Gold and Kasami exponents, is  more difficult as we show in Section
\ref{section:exceptionals}. For  the Gold exponents $2^k+1$, we found the binary weights of their inverses modulo $2^n-1$ in terms of $n$ and $k$. For the Kasami exponents $2^{2k}-2^k+1$, 
we showed that the binary weight of the inverses is uniquely defined by the binary weight of its inverse modulo $2^r-1$, where $r$ is the least positive residue of $n$ modulo $\ordb[2^{2k}-2^k+1] = 6k$.
Presently, it is not clear to us whether we may expect more explicit results on Kasami exponents than those given in Theorem \ref{thm:kasami}.

Generally, for a fixed positive integer we considered the function $\de$, which maps
$n$ to the least positive residue of the inverse of $d$ modulo $2^n-1$, where $d$ is a fixed positive integer.  We are not aware whether the function $\de$ was studied before.
We think that a better understanding of $\de$ in general, as well as for special values of $d$, is a fundamental problem deserving a further development.
In particular, it would be interesting to see if there are any connections
with the algebraic feedback shift register sequences (see \cite{klapper}) yielding new insights on the problem.

\end{document}